\def\mode{0}	
\let\NAT@parse\undefined
\newcommand\orcidicon[1]{\href{https://orcid.org/#1}{\includegraphics[scale=0.04]{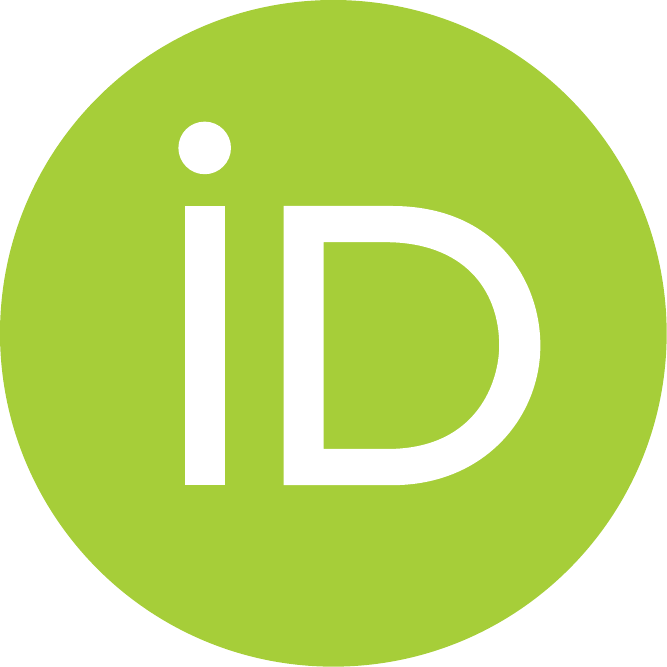}}}
\newcommand{\myParagraph}[1]{{\color{nblue}\sf\itshape\small\bfseries {#1.}}}
\newcommand{\myParagraph}[1]{{\bfseries {#1.}}}
\newcommand{\paperType}{\if0\mode letter\xspace \else paper\xspace \fi}
\newcommand{\Real}[1]{ { {\mathbb R}^{#1} } }
\DeclareMathOperator*{\argmin}{arg\;min}
\newcommand{\DeclareAutoPairedDelimiter}[3]{%
	\expandafter\DeclarePairedDelimiter\csname Auto\string#1\endcsname{#2}{#3}%
	\begingroup\edef\x{\endgroup
		\noexpand\DeclareRobustCommand{\noexpand#1}{%
			\expandafter\noexpand\csname Auto\string#1\endcsname*}}%
	\x}
\DeclareAutoPairedDelimiter{\ceil}{\lceil}{\rceil}
\DeclareAutoPairedDelimiter{\floor}{\lfloor}{\rfloor}
\newcommand{\e}{\mathrm{e}}
\newcommand{\one}{\mathds{1}}
\newcommand{\spec}[1]{\sigma(#1)}
\newcommand{\pol}[2]{h_{#1}(z;#2)}
\newcommand{\x}[2]{x_{#1}(#2)}
\newcommand{\net}[1]{\mathcal{G}_{#1}}
\newcommand{\nodes}{\mathcal{V}}
\newcommand{\edges}[1]{\mathcal{E}_{#1}}
\newcommand{\neigh}[2]{\nodes_{#1}(#2)}
\renewcommand{\deg}[2]{d_{#1}(#2)}
\newcommand{\dmax}[1]{D_{#1}}
\renewcommand{\u}[2]{u_{#1}(#2)}
\newcommand{\rate}[1]{R_{#1}}
\newcommand{\up}[1]{\bar{\lambda}_{#1}}
\newcommand{\eig}[2]{\lambda_{#2}^{#1}}
\newcommand{\bareig}[2]{\bar{\lambda}_{#2}^{#1}}
\newcommand{\taun}{\tau_n}
\newcommand{\gpos}{\lambda}
\newtheoremstyle{colorthm}{\topsep}{\topsep}{\normalfont}{1em}{\color{nblue}\sf\itshape\small\bfseries}{:}{ }{\thmname{#1}\thmnumber{ #2}{\thmnote{ (#3)}}}
\theoremstyle{colorthm}
\theoremstyle{plain}
\newtheorem{cor}{Corollary}
\newtheorem{prop}{Proposition}
\newtheorem{lemma}{Lemma}
\newtheorem{prob}{Problem}
\newtheorem{ass}{Assumption}
\newtheorem{rem}{Remark}
\Crefname{cor}{Corollary}{Corollaries}
\Crefname{conj}{Conjecture}{Conjectures}
\Crefname{ass}{Assumption}{Assumptions}
\Crefname{figure}{Fig.}{Figures}
\newcommand{\revision}[1]{{\color{black}#1}}
\newcommand{\blue}[1]{{\color{black} #1}}
\newcommand{\linkToPdf}[1]{\href{#1}{\blue{(pdf)}}}
\newcommand{\linkToPpt}[1]{\href{#1}{\blue{(ppt)}}}
\newcommand{\linkToCode}[1]{\href{#1}{\blue{(code)}}}
\newcommand{\linkToWeb}[1]{\href{#1}{\blue{(web)}}}
\newcommand{\linkToVideo}[1]{\href{#1}{\blue{(video)}}}
\newcommand{\linkToMedia}[1]{\href{#1}{\blue{(media)}}}
\newcommand{\award}[1]{\xspace} 
\newcommand{\eg}{\emph{e.g.,}\xspace}
\newcommand{\ie}{\emph{i.e.,}\xspace}
\title{\titlecap{Faster consensus via a sparser controller}}
\author{Luca~Ballotta\textsuperscript{\orcidicon{0000-0002-6521-7142}}
	and Vijay~Gupta\textsuperscript{\orcidicon{0000-0001-7060-3956}},~\IEEEmembership{Fellow,~IEEE}
	\thanks{This work was supported in part
		by the Italian Ministry of Education, University and Research (MIUR) through
		the PRIN Project under Grant 2017NS9FEY entitled ``Realtime Control of 5G Wireless Networks'' and through
		the initiative "Departments of Excellence" (Law 232/2016).
	}%
	\thanks{Luca Ballotta is with the Department of Information Engineering, University of Padova, 35131 Padova, Italy
		(e-mail: ballotta@dei.unipd.it).}%
	\thanks{Vijay Gupta is with the Elmore Family School of Electrical and Computer Engineering, Purdue University, West Lafayette, IN, 47907, USA 
		(e-mail: gupta869@purdue.edu).}
	}
\title{\titlecap{Faster consensus via a sparser controller}}
\author{Luca~Ballotta\textonesuperior \xspace and Vijay~Gupta\texttwosuperior
	\thanks{This work was supported in part
		by the Italian Ministry of Education, University and Research (MIUR) through
		the PRIN Project under Grant 2017NS9FEY ``Realtime Control of 5G Wireless Networks'' and
		the initiative "Departments of Excellence" (Law 232/2016).
		Views and opinions expressed in this work are of the authors and may not reflect those of the funding institutions.}%
	\thanks{\textonesuperior Department of Information Engineering, University of Padova, 35131 Padova, Italy.
		\texttt{ballotta@dei.unipd.it}.}%
	\thanks{\texttwosuperior Elmore Family School of Electrical and Computer Engineering, Purdue University, West Lafayette, IN, 47907, USA.
		\texttt{gupta869@purdue.edu}.}
}
\begin{document}
	
	\bstctlcite{MyBSTcontrol}
	
	\maketitle
	
	\pagestyle{empty}
	\thispagestyle{empty}
	\numberwithin{equation}{section}
	

\begin{abstract}
	In this \paperType,
	we investigate the architecture of an optimal controller
	that \revision{maximizes} the convergence \revision{speed} of a consensus protocol
	with single-integrator dynamics.
	Under the assumption that communication delays increase with the number of hops from which information is allowed to reach each agent,
	we address the optimal control design under delayed feedback
	and	show that the optimal controller features, in general, a sparsely connected architecture.
	
	\if 0\mode
	\begin{IEEEkeywords}
		Communication latency,
		consensus,
		control architecture,
		convergence rate,
		distributed control.
	\end{IEEEkeywords}
	\fi
	
\end{abstract}

\section{Introduction}\label{sec:intro}

\if0\mode
\IEEEPARstart{C}{onsensus}
\else
Consensus
\fi
of dynamical systems
is a fundamental tool in control theory and applications
that has been extensively studied over the latest few decades~\cite{4118472,XIAO200733}. One implementation issue in distributed control,
arising especially in large-scale systems where communication occurs over wireless channels,
is latency due to data transmission.
Such a latency affects the feedback information exchanged among agents.
When this latency is non-negligible compared to the system dynamics,
a careful design of the controller needs to consider and compensate the feedback delays
to avoid performance degradation.
The presence of delayed feedback information used in control may trigger dynamic modes
that force control actions to be conservative in order to ensure stability, thus,
making it more difficult to command the system trajectory.

Classical control literature 
addresses this problem by assuming distributed controllers with given architecture (\textit{structured controllers})
where the design of feedback gains takes into account communication delays in the system dynamics.
A large body of work addresses stability conditions. 
Relevant to the consensus problem that we concentrate on, 
a few related works are~\cite{MUNZ20101252} that studies stability of consensus under several delay models for a network of identical agents in continuous time,
\cite{ren2017finite, SUN2021419} that deal with finite-time stabilization of discrete-time systems,
and~\cite{bamjovmitpat12, Chehardoli2019} that consider error compensation in vehicular platoons with different network topologies.
Other relevant works focus on performance,
\eg~\cite{nedic2008convergence} finds an upper bound on the convergence rate of consensus under time-varying delays,
\cite{8430769,8485772} minimize the $ \mathcal{H}_2 $-norm associated with the consensus error,
and~\cite{LI2018144} proposes a control protocol for high-order systems to maximize convergence speed.

A second,
more recent, line of work addresses design not only of control gains
but also of the controller architecture.
For example,
work~\cite{8485772} proposes greedy algorithms that modify the communication links to decrease the $H_2$-norm of the consensus error under time-delays.
The authors in~\cite{ANDERSON2019364} introduce the \textit{System Level Synthesis} as
a possible framework that accounts for impact of communication locality in robust control design.
The article~\cite{JOVANOVIC201676} surveys methods that trade
controller complexity 
for closed-loop performance of stochastically forced systems,
where the optimization problem associated with control design incorporates a regularization term that penalizes the presence of communication links.
Work~\cite{8047285} investigates algorithms for near-optimal edge selection to maximize convergence rate of a multi-robot system to a rigid formation.

While the latter body of literature mostly works under the common wisdom that reducing the total number of communication links entails advantages,
this is typically intended as a benefit from scalability or resource allocation standpoint
rather than to \textit{performance}.
In particular,
the all-to-all architecture 
is typically regarded as optimal for closed-loop performance~\cite{8485772,JOVANOVIC201676},
whereas practical constraints impose sparser implementations in applications.
This stems from the hidden assumption that communication delays
do not depend on the controller architecture in relevant manner.
However,
if 
this did not hold true,
the optimal architecture may be crucially different. 
For example,
work~\cite{gupta2010delay} characterized consensus on lattices under time-slotted communication
where feedback delays depend on transmission power,
proving that the controller architecture maximizing convergence speed is sparse when the lattice dimension is greater than one.
Under similar spirit,
recent work~\cite{ballotta2023tcns} considered mean-square consensus on undirected graphs
where feedback delays depend on the communication hops over a given network,
showing that the optimal controller features sparse interconnections if delays increase fast enough with the number of hops.

In this \paperType,
we draw inspiration from the setup in~\cite{ballotta2023tcns}
to investigate 
performance of distributed controllers 
with respect to the convergence rate of a consensus protocol
under architecture-dependent communication latency.
Differently from~\cite{gupta2010delay},
we consider system dynamics that induce more restrictive stability conditions,
assume a more general model for delays,
and address optimization of feedback gains.
Importantly,
rather than purposely modifying the communication links to improve performance,
as done in, \eg~\cite{8485772},
we consider different architectures with no explicit relation to performance
and explore the effect of architecture-dependent delays.
In analogy to the fundamental performance trade-off observed for stochastic systems in~\cite{ballotta2023tcns},
we show that,
\textit{when delays increase with the density of the architecture,
the distributed controller that minimizes the convergence rate has in general a sparse architecture,
irrespectively of the specific network topology.}
In particular,
numerical experiments 
yield fundamentally different conclusions than~\cite{gupta2010delay} where the optimal architecture was proven
to be the complete graph for the ring topology. 

This \paperType is organized as follows.
\autoref{sec:setup} introduces the system setup. 
\autoref{sec:design} tackles optimal feedback gain design for a distributed controller.
\autoref{sec:simulations} presents numerical performance with different architectures,
showing optimality of sparse controllers.
Conclusions are drawn in~\autoref{sec:conclusion}.

\section{Setup}\label{sec:setup}

\subsection{{System Model}}\label{sec:model}

We address a Networked Control System composed of $ N $ interconnected agents (or nodes)
which aim to consensus.

\myParagraph{Agent Dynamics}
Each agent $ i\in\{1,\dots,N\} $ evolves as a scalar discrete-time single integrator,
\begin{equation}\label{eq:dynamics}
	\x{i}{k+1} = \x{i}{k} + \u{i}{k},
\end{equation}
where $ k\ge0 $ denotes time,
$ \x{i}{k} \in\Real{} $ is the state of agent $ i $,
and $ \u{i}{k} \in\Real{} $ is its control input.
We denote by
$ \x{}{k}\in\Real{N} $ and $ \u{}{k}\in\Real{N} $
the states and inputs of all agents,
respectively.

\myParagraph{Feedback Control}
Agents exchange state information according to a communication network modeled
as a graph $ \net{n} $,
where $ n $ \revision{parametrizes the number of links in $\net{n}$.}%
\footnote{
	In the following,
	we interchangeably use the phrases \emph{communication network},
	\emph{network topology},
	and \emph{controller architecture} (or just \emph{architecture}),
	\revision{by which we refer to the graph that
	describes data exchange among agents.}
	A \emph{\revision{distributed controller}} \revision{is defined by} both its \emph{architecture} and \emph{feedback gains}.
}
\begin{ass}[Distributed controller architecture]\label{ass:architecture}
	The controller architecture is given by the undirected graph $ \net{n} \doteq (\nodes,\edges{n}) $
	\revision{where $ \nodes = \{1,\dots,N\} $ and 
	$ \edges{n} \subseteq \nodes \times \nodes $ is a collection of node pairs
	such that nodes $ i $ and $ j $ communicate if and only if $ (i,j) \in \edges{n} $.
	For $n>1$,
	the edge set is built as}
	\begin{equation}\label{eq:edges}
		\edges{n+1} = \edges{n} \cup \bigcup_{i\in\nodes}\bigcup_{j\in\neigh{n}{i}}\bigcup_{\ell\in\neigh{n}{j}\setminus(\neigh{n}{i}\cup\{i\})} (i,\ell),
	\end{equation}
	where \revision{$\net{1}$} is 
	assigned \emph{a priori} and $ \neigh{n}{i} $ is the \emph{$ n $-neighborhood} of node $ i $,
	\ie
	$ \neigh{n}{i} \doteq \{j\in\nodes : (i,j)\in\edges{n}\} $.
	Graph $ \net{1} $ is connected and simple,
	and $ \deg{n}{i} \doteq |\neigh{n}{i}| $.
\end{ass}
\revision{In words,
given an initial architecture $\net{1}$,
$\net{n}$ is built by connecting nodes whose distance in $\net{1}$ is at most $n$ hops.}

Given 
an architecture $ \net{n} $,
agent $ i $ computes
control inputs $ \u{i}{\cdot} $ via state measurements
received from its $ n $-neighbors.
\begin{ass}[Communication delays~{\cite{ballotta2023tcns}}]\label{ass:delays}
	State measurements communicated across $ \net{n} $
	are received after delay $ \taun = f(n) $,
	where $ f(\cdot) $ is a positive increasing sequence.
\end{ass}
In order to let agents achieve consensus,
we assume Laplacian-type proportional feedback control\revision{~\cite{MUNZ20101252,8485772}},
that is,
\begin{equation}\label{eq:control}
	\u{}{k} = -K_n\x{}{k-\taun},
\end{equation}
where the feedback delay $ \taun > 0 $ follows from~\cref{ass:delays}.

\begin{ass}[Structure of feedback gains]\label{ass:laplacian-K}
	Matrix $ K_n $ satisfies $ K_n = K_n^{\top} $
	and $ K_n\one = 0 $,
	$ \one \in \Real{N} $ being the vector of all ones.
	The feedback gain $ [K_n]_{ij} = [K_n]_{ji} $ is nonzero
	only if the communication link $ (i,j) $ belongs to $ \edges{n} $
	or if $ j = i $.
	Also,
	we require $ [K_n]_{ii} = -\sum_{j\in\neigh{n}{i}}[K_n]_{ij} $ for all $ i\in\nodes $.
\end{ass}

Finally,
agent evolution~\eqref{eq:dynamics} paired with controller~\eqref{eq:control}
yields the following global \revision{consensus} dynamics,
\begin{equation}\label{eq:controlled-dynamics}
	\x{}{k+1} = \x{}{k} - K_n\x{}{k-\taun}.
\end{equation}

\subsection{{Problem Formulation}}\label{sec:problem-formulation}

In this \paperType,
we are primarily interested in investigating the \textit{controller architecture}
that yields the fastest convergence of the consensus dynamics~\eqref{eq:controlled-dynamics}.
We also address \textit{optimal feedback gains} that maximize the consensus speed, 
which allows for a fair comparison of controllers with different architectures.

\revision{It is well known that the convergence rate of the delay-free autonomous dynamics corresponding to~\eqref{eq:controlled-dynamics}
is geometric and given by the Second Largest Eigenvalue Modulus (SLEM) of the state matrix.
In the presence of delays,
system~\eqref{eq:controlled-dynamics} can be rewritten as a delay-free system
by means of state augmentation~\revision{\cite{gupta2010delay}},
where the augmented state $ \x{a}{k} $ stacks $ \taun + 1 $ consecutive states from time $ k - \taun $ to time $ k $.
The convergence rate is then the SLEM of the augmented state matrix $A_n$.
Standard computations yield the following result.

\begin{lemma}[Eigenvalues of delay system~\cite{ballotta2023tcns}]\label{prop:eigenvalues}
	Let~\eqref{eq:controlled-dynamics} be equivalently written as the following delay-free system,
	\begin{equation}\label{eq:augmented-system}
		\x{a}{k+1} = A_n\x{a}{k}.
	\end{equation}
	Then, the spectrum of $ A_n $ is given by
	\begin{equation}\label{eq:augmented-system-eigs}
		\spec{A_n} = \bigcup_{j=1}^N \left\lbrace z : \pol{n}{\eig{n}{j}} = 0 \right\rbrace,
	\end{equation}
	where $ \eig{n}{j} \in\spec{K_n}$ is the $ j $th eigenvalue of $ K_n $ in non-decreasing order,
	such that $ 0 = \eig{n}{1} < \eig{n}{2} \le \dots \le \eig{n}{N} $,
	and the \emph{characteristic polynomial} associated with $ \eig{}{} $ is
	\begin{equation}\label{eq:char-polynomial}
		\pol{n}{\eig{}{}} \doteq z^{\taun+1} - z^{\taun} + \eig{}{}.
	\end{equation}
\end{lemma}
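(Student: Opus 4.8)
The plan is to make the state augmentation leading to~\eqref{eq:augmented-system} explicit and then exploit the symmetry of $K_n$ to decouple the dynamics into $N$ scalar delay recursions. First I would stack the last $\taun+1$ states into $\x{a}{k} \doteq \big(\x{}{k}^{\top},\,\x{}{k-1}^{\top},\,\dots,\,\x{}{k-\taun}^{\top}\big)^{\top}\in\Real{N(\taun+1)}$ and observe that~\eqref{eq:controlled-dynamics} is equivalent to~\eqref{eq:augmented-system} with the block companion matrix
\begin{equation*}
	A_n = \begin{bmatrix}
		I_N & 0 & \cdots & 0 & -K_n \\
		I_N & 0 & \cdots & 0 & 0 \\
		0 & I_N & \cdots & 0 & 0 \\
		\vdots & & \ddots & & \vdots \\
		0 & 0 & \cdots & I_N & 0
	\end{bmatrix},
\end{equation*}
whose first block row encodes the update~\eqref{eq:controlled-dynamics} and whose remaining rows merely shift the stored past states.

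Next, since $K_n = K_n^{\top}$ by~\cref{ass:laplacian-K}, I would write $K_n = U\Lambda_n U^{\top}$ with $U$ orthogonal and $\Lambda_n = \mathrm{diag}(\eig{n}{1},\dots,\eig{n}{N})$. Conjugating $A_n$ by the block-diagonal orthogonal matrix $\mathrm{diag}(U^{\top},\dots,U^{\top})$ leaves every identity block unchanged and turns the $-K_n$ block into $-\Lambda_n$; a further permutation of rows and columns that collects, for each $j$, the $\taun+1$ coordinates associated with the $j$th eigenvector of $K_n$ brings the matrix into block-diagonal form $\mathrm{diag}(C_1,\dots,C_N)$, where $C_j\in\Real{(\taun+1)\times(\taun+1)}$ is the scalar companion matrix of the recursion $\xi(k+1) = \xi(k) - \eig{n}{j}\,\xi(k-\taun)$.

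Then I would compute $\det(zI - C_j)$ by expanding along the column carrying $-\eig{n}{j}$ (equivalently, by inserting the Ansatz $\xi(k)=z^{k}$ into the recursion and clearing $z^{k-\taun}$), obtaining $\det(zI - C_j) = z^{\taun+1} - z^{\taun} + \eig{n}{j} = \pol{n}{\eig{n}{j}}$. Hence $\spec{C_j} = \{z : \pol{n}{\eig{n}{j}} = 0\}$, and since the spectrum of a block-diagonal matrix is the union of the spectra of its diagonal blocks, taking the union over $j=1,\dots,N$ yields exactly~\eqref{eq:augmented-system-eigs}; the degrees are consistent, since each $\pol{n}{\cdot}$ has degree $\taun+1$ and there are $N$ of them, matching $\dim A_n = N(\taun+1)$. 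The stated ordering $0 = \eig{n}{1} < \eig{n}{2}\le\dots\le\eig{n}{N}$ follows because, by~\cref{ass:architecture,ass:laplacian-K}, $K_n$ is a symmetric Laplacian of the connected graph $\net{n}$, whose zero eigenvalue is therefore simple.

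The companion-matrix bookkeeping is routine; the two points that need a little care are (i) writing the permutation that realizes the decoupling so that the reduction to scalar blocks is rigorous rather than merely modal, and (ii) checking that passing from the recursion to the polynomial introduces no spurious root at $z=0$. For (ii) note $\pol{n}{\eig{}{}}\big|_{z=0} = \eig{}{}$: for the consensus mode $\eig{n}{1}=0$ this gives $\pol{n}{0} = z^{\taun}(z-1)$, whose root $z=0$ of multiplicity $\taun$ is precisely the one carried by the singular block $C_1$, while for every other mode $\eig{n}{j}\neq 0$ and $z=0$ is not an eigenvalue at all, so no counting error occurs.
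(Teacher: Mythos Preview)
Your argument is correct and is exactly the ``standard computation'' the paper alludes to: the paper does not actually prove this lemma but cites it from~\cite{ballotta2023tcns}, merely noting that the augmented state stacks $\taun+1$ consecutive states and that ``standard computations yield the following result.'' Your explicit block-companion form, orthogonal diagonalization of the symmetric $K_n$, permutation into scalar companion blocks $C_j$, and identification of $\det(zI-C_j)$ with $\pol{n}{\eig{n}{j}}$ are precisely those computations, so there is nothing to contrast.
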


Matrix $ A_n $ has an eigenvalue at $ 1 $,
corresponding to a root of $ \pol{n}{\eig{n}{1}} $.
For notation convenience,
we denote the largest eigenvalue modulus corresponding to $ \lambda \in \spec{K_n} $ by
\begin{equation}\label{eq:largest-eig-modulus-lambda}
	\rho_n(\lambda) \doteq \max\left\lbrace|z| : \pol{n}{\lambda} = 0 \right\rbrace,
\end{equation}
which allows us to express the convergence rate of~\eqref{eq:controlled-dynamics} as
\begin{equation}\label{eq:conv-rate}
	\rate{n} = \max\left \lbrace\rho_n(\lambda) : \lambda\in\spec{K_n}\setminus\{0\}\right\rbrace.
\end{equation}}

Finally,
to investigate the performance of controllers with different architectures under architecture-dependent delays,
we address maximization of the convergence speed of~\eqref{eq:controlled-dynamics}.

\begin{prob}[Optimal distributed controller]\label{prob:optimal-architecture}
	Given~\cref{ass:architecture,ass:delays,ass:laplacian-K} and an architecture $\net{1}$,
	find the parameter $ {n} $ 
	with optimal feedback gains that optimizes the convergence of~\eqref{eq:controlled-dynamics},
	\begin{equation}\label{eq:prob-optimal-n}
		n^* \in \argmin_n \min_{K_n\revision{\in\mathcal{K}_n}}\;\rate{n},
	\end{equation}
	\revision{where $ \mathcal{K}_n $ collects all matrices that satisfy~\cref{ass:laplacian-K}}.
\end{prob}

\cref{prob:optimal-architecture} requires to evaluate the convergence rate $ \rate{n} $
for $ n \in\{1,\dots,n_\text{max}\} $,
where $\net{n_\text{max}}$ is the complete graph. 
By virtue of the inner minimization in~\eqref{eq:prob-optimal-n} that involves the feedback gains,
we can fairly compare controllers with different architectures.
In~\autoref{sec:design},
we address the optimal design of feedback gains for a given architecture, 
while in~\autoref{sec:simulations} we 
solve~\cref{prob:optimal-architecture} for different 
communication topologies.

\begin{rem}[Impact of delays]
	Under constant
	communication delays,
	the fastest convergence is typically achieved by the densest architecture.
	Here,
	adding communication links yields both the benefit of speeding up information sharing across the network
	(\cref{ass:architecture})
	and the drawback of increasing feedback delays (\cref{ass:delays}).
	As shown later via numerical experiments,
	the latter aspect is crucial role in determining the optimal controller architecture,
	and in particular it can enable fast convergence via sparse controllers.
	Importantly,
	this is different than, \eg~\cite{8485772}
	where carefully removing links improves performance under constant delays:
	indeed,
	we do not remove edges with attention to the performance metric,
	but we optimize the feedback gains of different architectures
	whose topology is not optimized w.r.t. performance.
\end{rem}

\section{Control Design}\label{sec:design}

In this section,
we propose strategies to choose feedback gains in $ K_n $
with a given architecture $ \net{n} $.
Because $ n $ ($ \net{n} $) is fixed,
we omit the script $ n $ in the rest of this section.

\subsection{Stability Analysis}\label{sec:stability-analysis}

The delay-free version of the single-integrator dynamics~\eqref{eq:controlled-dynamics}
is stabilized by any feedback gain matrix $ K $
such that $ I - K $ 
has all eigenvalues inside the unit circle except for the one fixed at $ 1 $.
The same stability requirement holds
if the delay-free inertia term $ \x{}{k} $ in the right-hand side  of~\eqref{eq:controlled-dynamics} is also delayed
so that the dynamics are simply ``stretched" in time by a factor $ \tau $,
as assumed in~\cite{gupta2010delay}.
Conversely,
more complex stability conditions hold for system~\eqref{eq:controlled-dynamics},
which were derived in~\cite{ballotta2023tcns} and are reported below for convenience.\footnote{
	Even though reference~\cite{ballotta2023tcns} is concerned with mean-square stability of 
	the stochastically forced version of~\eqref{eq:controlled-dynamics} with additive noise,
	stability conditions are the same because they depend only on the system modes (eigenvalues).
}

\begin{prop}[\!\!{\cite[Proposition~4]{ballotta2023tcns}}]
	System~\eqref{eq:controlled-dynamics} is stable if and only if,
	for $ j = 2,\dots,N $,
	\begin{equation}\label{eq:stability-condition-eig}
		0 < \gpos_j < \up{\tau},
		\qquad \up{\tau} \doteq 2\sin\left(\dfrac{\pi}{2}\dfrac{1}{2\tau+1}\right).
	\end{equation}
\end{prop}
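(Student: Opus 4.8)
The plan is to use \cref{prop:eigenvalues} to turn the claim into a scalar root‑location problem. By \eqref{eq:augmented-system-eigs} the system \eqref{eq:controlled-dynamics} is stable---i.e. $\rate{n}<1$, so it converges to consensus---if and only if, for every $\gpos\in\spec{K_n}\setminus\{0\}=\{\eig{n}{2},\dots,\eig{n}{N}\}$, all roots of $\pol{n}{\gpos}=z^{\tau+1}-z^{\tau}+\gpos$ lie strictly inside the open unit disk (the index $j=1$ contributes the eigenvalue $1$ and $\tau$ zeros, which does not affect consensus). Hence it suffices to prove the scalar fact that, for real $\gpos$, the polynomial $z^{\tau+1}-z^{\tau}+\gpos$ is Schur stable if and only if $0<\gpos<\up{\tau}$; applying this to $\eig{n}{2},\dots,\eig{n}{N}$ then gives exactly \eqref{eq:stability-condition-eig}. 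Necessity of $\gpos>0$ is immediate, since for $\gpos\le0$ the polynomial takes the value $\gpos\le0$ at $z=1$ and tends to $+\infty$ as $z\to+\infty$, hence has a real root in $[1,\infty)$.

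For $\gpos>0$ I would argue by continuity of roots together with a boundary‑crossing analysis. As $\gpos\to0^{+}$ the polynomial is a perturbation of $z^{\tau}(z-1)$: the simple root near $z=1$ moves to $z\approx1-\gpos$, while the $\tau$‑fold root at the origin splits into roots of modulus $\approx\gpos^{1/\tau}$, so for small $\gpos>0$ all $\tau+1$ roots are strictly inside. Because the roots depend continuously on $\gpos$, stability can be lost only when a root reaches the unit circle. Substituting $z=\mathrm{e}^{\mathrm{i}\theta}$ and using $\mathrm{e}^{\mathrm{i}\theta}-1=2\sin(\theta/2)\,\mathrm{e}^{\mathrm{i}(\theta+\pi)/2}$ in $\pol{n}{\gpos}=0$ gives the magnitude condition $\gpos=2\sin(\theta/2)$ and the phase condition $(\tau+\tfrac12)\theta\equiv\tfrac{\pi}{2}\pmod{2\pi}$, i.e. $\theta=\tfrac{\pi(4k+1)}{2\tau+1}$ with $k\in\{0,\dots,\tau\}$ over $\theta\in(0,2\pi)$; the corresponding critical gains are $\mu_{k}=2\sin\!\big(\tfrac{\pi(4k+1)}{2(2\tau+1)}\big)$. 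Since each argument $\tfrac{\pi(4k+1)}{2(2\tau+1)}$ lies in $\big[\tfrac{\pi}{2(2\tau+1)},\,\pi-\tfrac{\pi}{2(2\tau+1)}\big]$, the minimum of $\mu_k$ over $k$ is attained at $k=0$ (and at its conjugate $k=\tau$) and equals exactly $\up{\tau}=2\sin\!\big(\tfrac{\pi}{2}\tfrac{1}{2\tau+1}\big)$. Thus no root touches the unit circle for $0<\gpos<\up{\tau}$, and by continuation from the small‑$\gpos$ regime all roots remain strictly inside: sufficiency follows.

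For the converse when $\gpos\ge\up{\tau}$ I would verify transversality at every unit‑circle crossing. The derivative $z^{\tau-1}\big((\tau+1)z-\tau\big)$ vanishes only at $z=0$ and $z=\tau/(\tau+1)$, neither on the unit circle, so every unit‑circle root $z_{\ast}=\mathrm{e}^{\mathrm{i}\theta_{\ast}}$ is simple and the implicit function theorem applies, yielding $\mathrm{d}z/\mathrm{d}\gpos=-1/\big(z_{\ast}^{\tau-1}((\tau+1)z_{\ast}-\tau)\big)$. Using $z_{\ast}^{\tau}=-\mu/(z_{\ast}-1)$ (with $\mu$ the critical gain) and $|z_{\ast}|=1$, a short calculation shows that $\tfrac{\mathrm{d}}{\mathrm{d}\gpos}|z|^{2}=2\,\Re\!\big(\bar z_{\ast}\,\mathrm{d}z/\mathrm{d}\gpos\big)$ has the sign of $(2\tau+1)(1-\cos\theta_{\ast})>0$. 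Hence at every critical gain the crossing root(s) move strictly outward, so the number of roots strictly inside the unit disk is non‑increasing in $\gpos$ and strictly decreases at $\gpos=\up{\tau}$; consequently $z^{\tau+1}-z^{\tau}+\gpos$ fails to be Schur stable for every $\gpos\ge\up{\tau}$. Combining the three regimes proves the scalar fact, hence the proposition.

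I expect the main obstacle to be the boundary‑crossing bookkeeping---enumerating all admissible phases $\theta$ (including conjugates and the cases $z=\pm1$) and confirming that $\up{\tau}$ is the \emph{global} minimal critical gain---together with the transversality computation that pins down the direction of each crossing and thereby rules out stability for $\gpos\ge\up{\tau}$; by contrast the reduction via \cref{prop:eigenvalues}, the sign argument for $\gpos\le0$, and the small‑$\gpos$ perturbation are routine.
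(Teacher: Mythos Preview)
The paper does not prove this proposition: it is quoted verbatim from~\cite[Proposition~4]{ballotta2023tcns} and simply ``reported below for convenience'', so there is no in-paper proof to compare against. Your argument is nonetheless correct and self-contained: the reduction to the scalar Schur-stability question via~\cref{prop:eigenvalues} is the natural first step, and your boundary-crossing (D-decomposition) analysis is sound. In particular, the parametrization $z=\mathrm{e}^{\mathrm{i}\theta}$ with $\mathrm{e}^{\mathrm{i}\theta}-1=2\mathrm{i}\sin(\theta/2)\mathrm{e}^{\mathrm{i}\theta/2}$ yields exactly the phases $\theta_k=\tfrac{\pi(4k+1)}{2\tau+1}$, $k=0,\dots,\tau$, and the corresponding gains $\mu_k=2\sin\!\big(\tfrac{\pi(4k+1)}{2(2\tau+1)}\big)$ are minimized at $k\in\{0,\tau\}$ giving $\up{\tau}$; your transversality computation
\[
\Re\!\left(\bar z_\ast\,\dfrac{\mathrm{d}z}{\mathrm{d}\lambda}\right)
=\dfrac{(2\tau+1)(1-\cos\theta_\ast)}{\mu\,\big|(\tau+1)z_\ast-\tau\big|^{2}}>0
\]
is also correct and rules out roots re-entering the disk for $\lambda\ge\up{\tau}$. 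The only cosmetic point is that the cases $z=\pm1$ are already covered (your $\lambda\le0$ argument handles $z=1$, and $z=-1$ gives $\lambda=2>\up{\tau}$, hence never the first crossing), so the ``main obstacle'' you flag is in fact routine.
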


In words,
condition~\eqref{eq:stability-condition-eig} shows that the delay in~\eqref{eq:controlled-dynamics}
triggers unstable modes if control actions are too aggressive.
In particular,
larger delays force more conservative control,
as seen by the bound $ \up{\tau} $ which is decreasing with $ \tau $.
Notably,
the bound $ \up{\tau} $ does not depend on the network topology.

Straightforward application of Gershgorin theorem yields the following result,
which is amenable to distributed implementation
and restrict the usual conditions on edge weights.


\begin{cor}[Stabilizing uniform gains]\label{cor:stability-conditions-gains-uniform}
	Let $ \revision{K = -gL} $
	with $ L $ the Laplacian matrix of $ \net{} $.
	System~\eqref{eq:controlled-dynamics} is stable if
	\begin{equation}\label{eq:stability-condition-gains-uniform}
		0 < \revision{g} < \dfrac{\up{\tau}}{2\dmax{}}, \qquad \dmax{} \doteq \max_i \deg{}{i}.
	\end{equation}
\end{cor}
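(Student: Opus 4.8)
The plan is to specialize the spectral stability condition~\eqref{eq:stability-condition-eig} to the uniform gain $K=-gL$. First I would recall that, since $\net{}$ is connected, the graph Laplacian has a simple zero eigenvalue with eigenvector $\one$ and all remaining eigenvalues $0<\mu_2\le\cdots\le\mu_N$ strictly positive. Because $K$ is proportional to the Laplacian with factor $g$, its nonzero eigenvalues are $\lambda_j=g\mu_j$ for $j=2,\dots,N$; hence for any $g>0$ the lower bounds $\lambda_j>0$ in~\eqref{eq:stability-condition-eig} hold automatically, and the whole condition reduces to the single inequality $g\mu_N<\up{\tau}$, i.e., $g<\up{\tau}/\mu_N$.

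Second, I would upper-bound $\mu_N$ by the maximum degree via Gershgorin's circle theorem: each Gershgorin disc of the Laplacian is centered at the diagonal entry $\deg{}{i}$ and has radius $\sum_{j\neq i}|L_{ij}|=\deg{}{i}$, the row sums being zero by \cref{ass:laplacian-K}. Thus every eigenvalue lies in $\bigcup_i[0,\,2\deg{}{i}]\subseteq[0,\,2\dmax{}]$, so $\mu_N\le 2\dmax{}$ and therefore $\up{\tau}/\mu_N\ge\up{\tau}/(2\dmax{})$. Combining the two steps, the hypothesis $0<g<\up{\tau}/(2\dmax{})$ forces $0<g\mu_2\le\cdots\le g\mu_N<\up{\tau}$, which is exactly~\eqref{eq:stability-condition-eig} for $j=2,\dots,N$, and by that condition system~\eqref{eq:controlled-dynamics} is stable.

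I do not expect a genuine obstacle here — this is the ``straightforward application of Gershgorin'' advertised in the text. The two points that need a little care are the bookkeeping of sign and ordering conventions relating $K$, $L$, and the eigenvalue labels $\lambda_j$ appearing in~\eqref{eq:stability-condition-eig}, and flagging that Gershgorin gives a generally loose bound on $\mu_N$, so the stated range for $g$ is only sufficient (the sharp requirement being $g<\up{\tau}/\mu_N$). The advantage of the cruder bound is precisely that $\dmax{}$ is computable from purely local information, which is why \cref{cor:stability-conditions-gains-uniform} is well suited to a distributed implementation.
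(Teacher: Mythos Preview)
Your proposal is correct and matches the paper's approach exactly: the paper offers no explicit proof beyond the sentence ``Straightforward application of Gershgorin theorem yields the following result,'' and your two steps (reduce~\eqref{eq:stability-condition-eig} to $g\mu_N<\up{\tau}$, then bound $\mu_N\le 2\dmax{}$ via Gershgorin) are precisely that application spelled out. Your caveat about sign conventions is well placed---the paper's $K=-gL$ together with the positivity required in~\eqref{eq:stability-condition-eig} only coheres under the convention $L=A-D$ (or equivalently a typo for $K=gL$), but this does not affect the argument.
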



\subsection{Optimization of Feedback Gains}\label{sec:optimization}

\cref{cor:stability-conditions-gains-uniform}
give conditions for simple choices of feedback gains that can be implemented locally at nodes.
For example,
the standard choice $ \revision{g} = \frac{1}{2\dmax{}+1} $ becomes $ \revision{g} = \frac{\up{\tau}}{2\dmax{}+1} $.

We next address the optimization of feedback gains in~\cref{prob:optimal-architecture}.
This corresponds to the inner minimization of $ K_n $ in~\eqref{eq:prob-optimal-n} for a given architecture $ \net{n} $
(\ie for given $n$).
We focus on a centralized design that requires knowledge of the eigenvalues of $ K $,
deferring a distributed design to future work.

\begin{prob}[Optimal structured controller]\label{prob:gains}
	Find a feedback gain matrix $ K $ that minimizes the convergence rate of~\eqref{eq:controlled-dynamics},
	\begin{equation}\label{eq:optimization-gains}
		K^* \in \argmin_{K\in\mathcal{K}} \rate{}. 
	\end{equation}
\end{prob}

It is known that~\cref{prob:gains} is nonconvex
\revision{and nonsmooth~\cite{bagirov2014introduction}}.
Nonetheless,
\revision{the specific structure of our problem allows us to retrieve the solution of~\eqref{eq:optimization-gains},
as described next.}

A related problem is faced in~\cite{carli11tac} for delay-free double integrators,
where the authors note that
$ \rho(\lambda) $
is decreasing for $ \lambda < \lambda_{\text{th}} $
and increasing for $ \lambda > \lambda_{\text{th}} $,
for a threshold $ \lambda_{\text{th}} $.
This immediately yields that the SLEM of $ A $
is a root of either $ \pol{}{\lambda_2} $ or $ \pol{}{\lambda_N} $,
which simplifies the optimization problem into a cascade composed of a convex SDP
followed by an algebraic equation.

\begin{figure}
	\centering
	\begin{minipage}[l]{.5\linewidth}
		\centering
		\includegraphics[height=.85\linewidth]{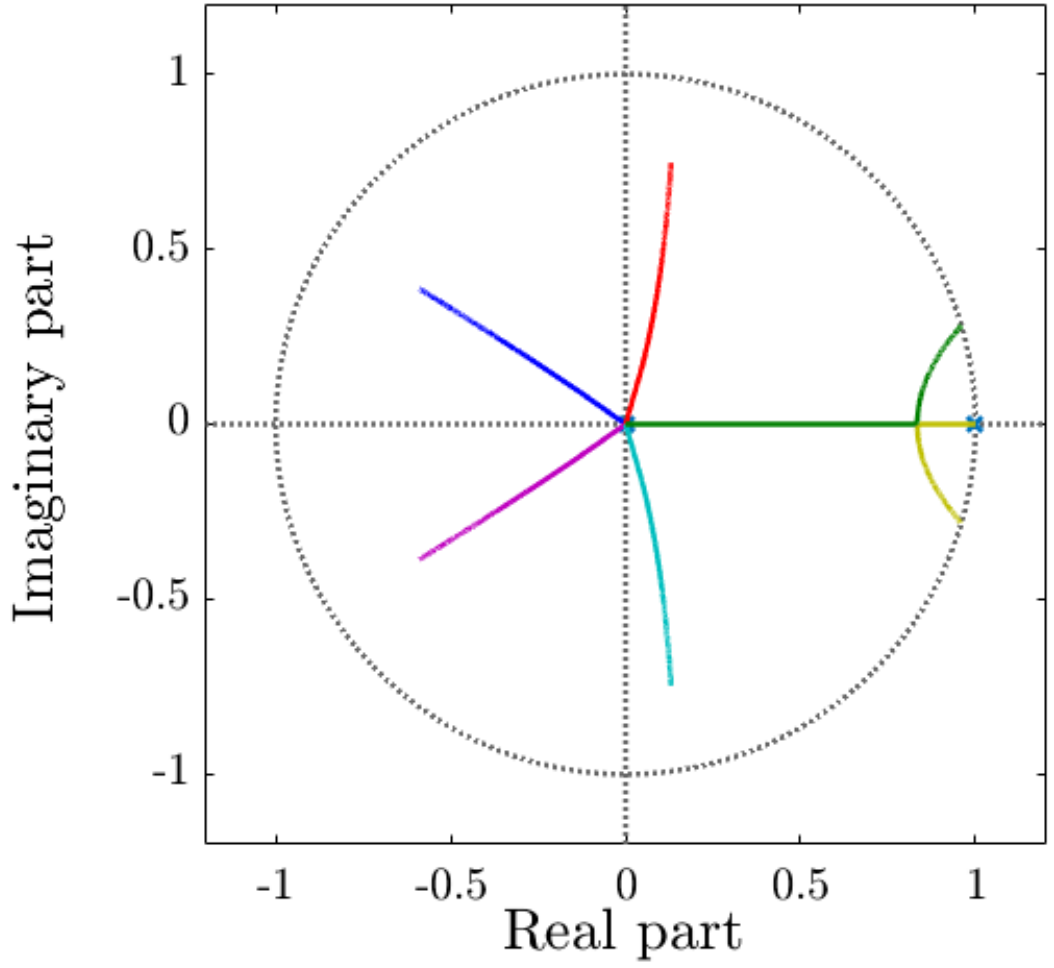}
		\label{fig:rl_tau_5}
	\end{minipage}%
	\hfil
	\begin{minipage}[r]{.5\linewidth}
		\centering
		\includegraphics[height=.85\linewidth]{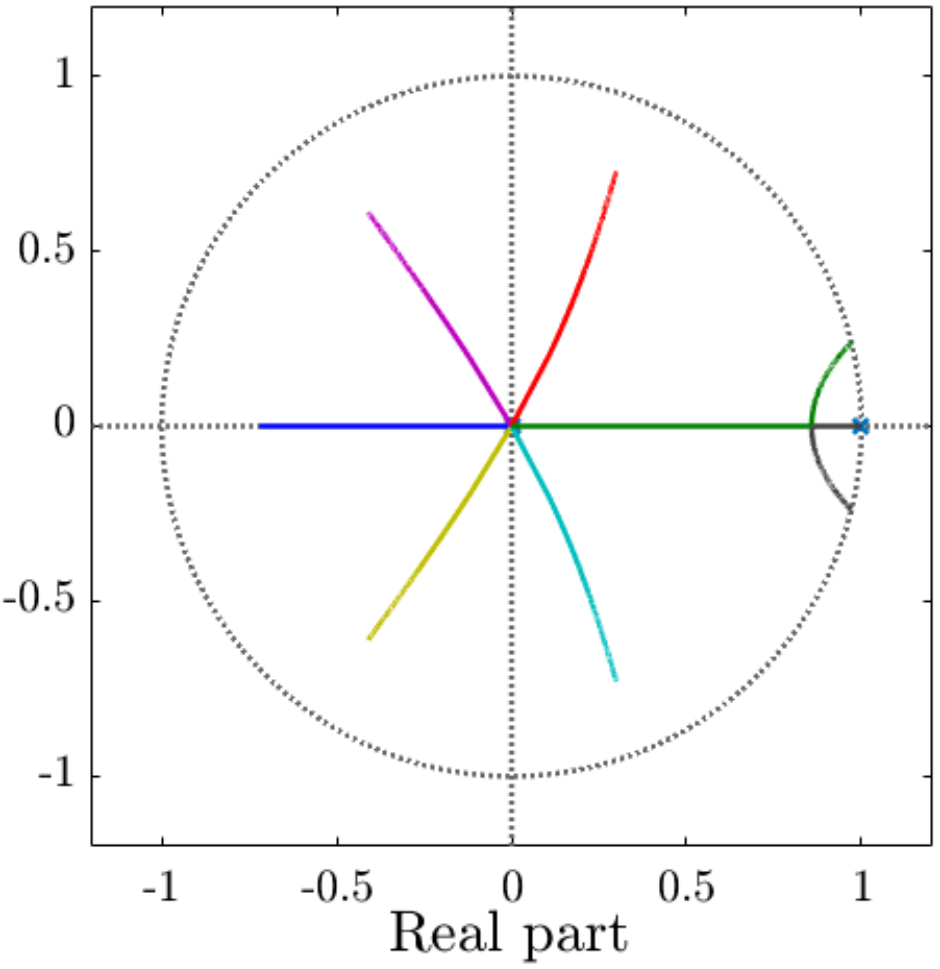}
		\label{fig:rl_tau_6}
	\end{minipage}
	\caption{Root locus of $ \pol{}{\lambda} $ with $ \lambda \in [0,\up{\tau}] $
		for $ \tau = 5 $ (left) and $ \tau = 6 $ (right).
		All branches expand from $ z = 0 $
		except for the one starting at $ z = 1 $.
	}
	\label{fig:rl}
\end{figure}

The key move that allowed the authors in~\cite{carli11tac}
to assess monotonicity properties of $\rho(\eig{}{})$
was its explicit calculation,
which in this case is not possible for arbitrary $ \tau $.
However,
the same behavior proved in~\cite{carli11tac} can be observed here
if we consider the root locus associated with the characteristic polynomial $ \pol{}{\lambda} $,
where $ \lambda $ acts as feedback gain.
Two typical root loci are shown in~\autoref{fig:rl},
one for odd $ \tau $ and one for even $ \tau $.
When $ \lambda = 0 $,
there is one root at $ z = 0 $ with multiplicity $ \tau $ and one simple root at $ z = 1 $.
As $ \lambda $ increases,
the latter decreases along the real axis
while all other roots grow in modulus.
In particular,
there is a real positive root that 
increases along the real axis till it meets the other positive real root,
after which the two corresponding branches enter the complex plane
and expand towards the unit circle. 
This means that the largest modulus of roots of the characteristic polynomial $ \pol{}{\lambda} $
is first decreasing
(when the largest real solution decreases monotonically from $ 1 $ along the real axis)
and then increasing
(corresponding to either a pair of complex branches or to the negative real root,
if present,
as soon as the corresponding modulus becomes larger than the largest positive real root).

The next lemmas formalize the discussion above.

\begin{lemma}[Monotonicity of real eigenvalues]\label{prop:monotonicity-real-roots}
	Consider the following definition
	associated with real roots of $ \pol{}{\lambda} $,
	\begin{equation}\label{eq:rate-real-roots}
		\rho_\Re(\lambda) \doteq \max\left\lbrace|z| : \pol{}{\lambda} = 0, z \in\Real{}\right\rbrace.
	\end{equation}
	If $ \rho_\Re(\lambda) \neq\emptyset $,
	there exists $ \lambda_{\text{th},\Re} $ such that 
	$ \rho_\Re(\lambda) $ is decreasing for $ \lambda < \lambda_{\text{th},\Re} $
	and increasing for $ \lambda > \lambda_{\text{th},\Re} $.
\end{lemma}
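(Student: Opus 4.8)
The plan is to reparametrize the real roots of the characteristic polynomial by the gain $\lambda$ itself. Writing $\pol{}{\lambda}=z^{\tau+1}-z^{\tau}+\lambda=\lambda-g(z)$ with $g(z)\doteq z^{\tau}(1-z)$, a real number $z$ solves $\pol{}{\lambda}=0$ if and only if $g(z)=\lambda$. Hence the set of real roots of $\pol{}{\lambda}$ is precisely the level set $g^{-1}(\lambda)$, and $\rho_\Re(\lambda)$ is the largest modulus in $g^{-1}(\lambda)$. The whole argument thus reduces to analyzing the scalar curve $g$ on $\Real{}$.

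First I would study $g$ through $g'(z)=z^{\tau-1}\bigl(\tau-(\tau+1)z\bigr)$, whose sign changes only at $z=0$ and $z=\tfrac{\tau}{\tau+1}$. A short case split on the parity of $\tau$ then pins down the shape of $g$; in particular, for both parities one has: $g$ restricted to $\bigl(\tfrac{\tau}{\tau+1},1\bigr)$ is a strictly decreasing bijection onto $\bigl(0,\lambda^{\max}\bigr)$, where $\lambda^{\max}\doteq g\bigl(\tfrac{\tau}{\tau+1}\bigr)=\tfrac{\tau^{\tau}}{(\tau+1)^{\tau+1}}$; $g$ increases from $0$ to $\lambda^{\max}$ on $\bigl(0,\tfrac{\tau}{\tau+1}\bigr)$; $g$ is negative on $(1,\infty)$; and $g$ restricted to $(-\infty,0)$ is a strictly decreasing bijection onto $(0,\infty)$ if $\tau$ is even, and is strictly negative if $\tau$ is odd.

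From this picture I would read off the dominant real root for every $\lambda>0$. For $0<\lambda<\lambda^{\max}$, the equation $g(z)=\lambda$ has a unique solution $z_{+}(\lambda)\in\bigl(\tfrac{\tau}{\tau+1},1\bigr)$, while all remaining real solutions have modulus strictly below $\tfrac{\tau}{\tau+1}\le z_{+}(\lambda)$; for the negative solution $z_{-}(\lambda)$ (which exists only when $\tau$ is even) this uses $g\bigl(-\tfrac{\tau}{\tau+1}\bigr)=(2\tau+1)\lambda^{\max}>\lambda^{\max}$ together with the monotonicity of $g$ on $(-\infty,0)$. Therefore $\rho_\Re(\lambda)=z_{+}(\lambda)=g^{-1}(\lambda)$ on this range, which is strictly decreasing in $\lambda$ since $g$ inverts to a decreasing map on $\bigl(\tfrac{\tau}{\tau+1},1\bigr)$. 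For $\lambda>\lambda^{\max}$, there is no real root when $\tau$ is odd, so $\rho_\Re(\lambda)=\emptyset$; when $\tau$ is even, the only real root is $z_{-}(\lambda)<0$ and $\rho_\Re(\lambda)=-z_{-}(\lambda)$, which is strictly increasing in $\lambda$ since $g$ inverts to a decreasing map on $(-\infty,0)$. Setting $\lambda_{\text{th},\Re}\doteq\lambda^{\max}$ then yields the claim.

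I expect the only genuinely delicate point to be ruling out the negative real root (present when $\tau$ is even) as the dominant one for $0<\lambda<\lambda^{\max}$, i.e.\ the estimate $|z_{-}(\lambda)|<\tfrac{\tau}{\tau+1}$ on that range; everything else is the routine calculus of the single-hump curve $g$. Finally, I would note that the statement is consistent with the downward jump of $\rho_\Re$ that occurs at $\lambda^{\max}$ when $\tau$ is even, because monotonicity is asserted only on the two open half-lines delimited by $\lambda_{\text{th},\Re}$.
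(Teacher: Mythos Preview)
Your argument is correct and follows the same underlying idea as the paper---track how the real roots of $\pol{}{\lambda}$ move as $\lambda$ increases and locate the threshold at $\lambda_b=\tfrac{\tau^\tau}{(\tau+1)^{\tau+1}}$---but the execution differs in two useful ways. The paper computes $\tfrac{\mathrm{d}z}{\mathrm{d}\lambda}$ via the implicit function theorem and counts roots with Descartes' rule of signs, whereas you reparametrize directly through the level sets of $g(z)=z^\tau(1-z)$ and read monotonicity off the inverse branches of $g$; this is more elementary and sidesteps the measure-zero exclusions the paper needs for the implicit function theorem. More substantively, your estimate $g\bigl(-\tfrac{\tau}{\tau+1}\bigr)=(2\tau+1)\lambda^{\max}>\lambda^{\max}$ settles a point the paper leaves open: the paper only says that for even $\tau$ the threshold is ``the minimum between $\lambda_b$ and the $\lambda$ such that the negative root is larger in modulus than the positive roots (if any),'' whereas your bound shows this crossing never occurs before $\lambda_b$, so in fact $\lambda_{\text{th},\Re}=\lambda_b$ unconditionally.
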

\begin{proof}
	See~\cref{app:monotonicity-real-eigenvalues}.
\end{proof}

\begin{lemma}[Monotonicity of complex eigenvalues]\label{lem:monotonicity-complex-roots}
	All complex roots of $ \pol{}{\lambda} $
	with nonzero imaginary part
	have modulus increasing with $ \lambda $.
\end{lemma}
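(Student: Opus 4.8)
The plan is to treat $\lambda$ as a real parameter and study how a root $z(\lambda)$ of $\pol{}{\lambda} = z^{\tau+1} - z^{\tau} + \lambda$ moves in the complex plane. First I would write $z = re^{\ui\theta}$ with $r = |z|$ and differentiate the characteristic equation implicitly with respect to $\lambda$. From $z^{\tau}(z-1) = -\lambda$ we get, upon differentiating, $\dfrac{dz}{d\lambda} = \dfrac{-1}{z^{\tau-1}\bigl((\tau+1)z - \tau\bigr)}$. The quantity of interest is $\dfrac{d}{d\lambda}\log r = \Re\!\left(\dfrac{1}{z}\dfrac{dz}{d\lambda}\right)$, so I would compute $\dfrac{1}{z}\dfrac{dz}{d\lambda} = \dfrac{-1}{z^{\tau}\bigl((\tau+1)z-\tau\bigr)}$ and show its real part is positive whenever $\operatorname{Im}(z)\neq 0$. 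Equivalently, it suffices to show $\Re\!\bigl(z^{\tau}\bigl((\tau+1)z - \tau\bigr)\bigr) < 0$ for every non-real root $z$ of the characteristic polynomial, since then $\Re(1/w) = \Re(w)/|w|^2$ carries the sign.

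The key step is to exploit the characteristic equation itself to rewrite $z^{\tau}$. Since $\lambda$ is real and $z^{\tau}(z-1) = -\lambda \in \Real{}$, the complex number $z^{\tau}(z-1)$ is real; hence $z^{\tau} = -\lambda/(z-1)$. Substituting, $z^{\tau}\bigl((\tau+1)z-\tau\bigr) = \dfrac{-\lambda\bigl((\tau+1)z-\tau\bigr)}{z-1}$. Taking real parts and multiplying by the positive quantity $|z-1|^2$, the sign of $\Re\!\bigl(z^{\tau}\bigl((\tau+1)z-\tau\bigr)\bigr)$ equals the sign of $-\lambda\,\Re\!\Bigl(\bigl((\tau+1)z-\tau\bigr)(\bar z - 1)\Bigr)$. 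Expanding the inner product gives $(\tau+1)|z|^2 - (\tau+1)\Re(z) - \tau\Re(z) + \tau = (\tau+1)|z|^2 - (2\tau+1)\Re(z) + \tau$, a quadratic in $\Re(z)$ whose behaviour I would pin down on the relevant region. Because all the non-real roots live strictly inside the closed unit disk over the stable range $\lambda\in(0,\up{\tau}]$ (as anticipated by \autoref{fig:rl}, and since the lemma is used only there), and because $\lambda>0$, the factor $-\lambda$ is negative, so I must show the quadratic $(\tau+1)|z|^2 - (2\tau+1)\Re(z) + \tau$ is \emph{positive} on the set of non-real roots to conclude $\Re\!\bigl(z^{\tau}((\tau+1)z-\tau)\bigr)<0$, hence $\frac{d}{d\lambda}\log r > 0$.

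To close that last point I would use a second consequence of the characteristic equation: taking moduli, $r^{\tau}|z-1| = \lambda$, and taking the argument, $\tau\theta + \arg(z-1) = \pi$ (mod $2\pi$), which constrains $\Re(z)$ and $r$ jointly and rules out the degenerate configurations where the quadratic could vanish or turn negative — in particular it prevents a non-real root from sitting at the two real roots $\Re(z) = r$ of the quadratic $(\tau+1)r^2-(2\tau+1)r+\tau$, which are exactly the points where a branch crosses between real and complex (the branch points of the root locus). I expect the main obstacle to be exactly this: establishing cleanly, without an explicit formula for the roots, that every non-real root stays in the region where the quadratic is positive for the whole parameter interval of interest. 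The natural route is a continuity/connectedness argument — the non-real branches emanate from the branch point, where the quadratic is zero, and move into the region where it is positive; combined with the implicit-function computation this shows $r$ is strictly increasing along each complex branch, with no mechanism to re-enter the region where monotonicity could reverse. An alternative, if a fully general-$\tau$ argument proves delicate, is to phrase the conclusion locally (strict monotonicity wherever a complex branch is defined, via the sign of $\frac{d}{d\lambda}\log r$) which is all that \autoref{sec:optimization} actually needs.
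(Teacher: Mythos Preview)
Your setup is correct and in fact coincides with the paper's: implicit differentiation gives the same condition, namely that $\frac{d}{d\lambda}\log|z|>0$ is equivalent to $\Re\!\bigl(z^{\tau}((\tau+1)z-\tau)\bigr)<0$. Your algebraic substitution $z^{\tau}=-\lambda/(z-1)$ then reduces this cleanly to the geometric statement that every non-real root lies outside the circle with diameter $[\tfrac{\tau}{\tau+1},1]$, i.e.\ that $(\tau+1)|z|^{2}-(2\tau+1)\Re(z)+\tau>0$. The gap is exactly where you flag it: you do not prove this last inequality. The continuity sketch you offer is not enough --- the complex branches emanating from the break point $z=\tfrac{\tau}{\tau+1}$ start \emph{on} the boundary of that circle (so the quadratic is zero there), and ``they move into the positive region and never return'' needs a quantitative argument, not a connectedness heuristic. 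Restricting to $\lambda\in(0,\bar\lambda_\tau]$ does not help either, since the lemma is stated for all $\lambda$ and, more importantly, that restriction gives no direct control on the position of the roots relative to your circle.

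The paper closes precisely this gap by using one more consequence of $h(z;\lambda)=0$ that you mention but do not exploit: taking the imaginary part in polar form yields $\rho=\dfrac{\sin(\tau\theta)}{\sin((\tau+1)\theta)}$ for any non-real root $z=\rho e^{i\theta}$. Substituting this into the monotonicity condition (equivalently, into your quadratic) collapses everything to a single trigonometric inequality; after the product-to-sum identity it becomes $(2\tau+1)\sin\theta>\sin((2\tau+1)\theta)$ for $\theta\in(0,\pi)$, which is the classical bound $|\sin n\theta|\le n\,|\sin\theta|$ with strict inequality off the integers multiples of $\pi$. So the missing ingredient in your argument is not a topological continuation but the polar relation $\rho\sin((\tau+1)\theta)=\sin(\tau\theta)$; once you feed that into your quadratic, the proof closes in one line.
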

\begin{proof}
	See~\cref{app:monotonicity-complex-eigenvalues}.
\end{proof}

\begin{figure}
	\centering
	\includegraphics[height=.4\linewidth]{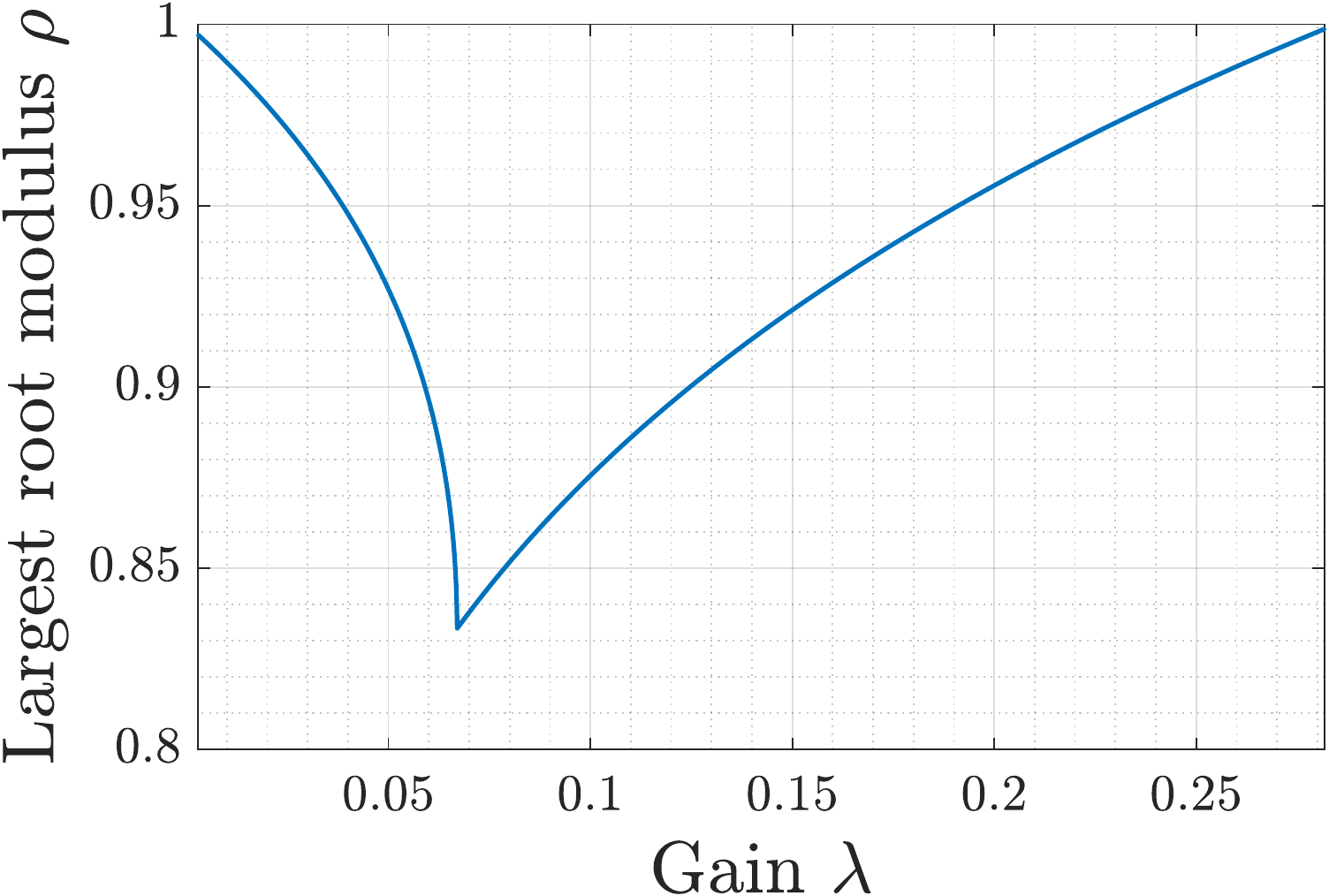}
	\caption{Graphic of $ \rho(\lambda) $ with $ \lambda \! \in \!(0,\up{\tau}) $ for $ \tau = 5 $.}
	\label{fig:rho}
\end{figure}


\cref{prop:monotonicity-real-roots,lem:monotonicity-complex-roots} straightly lead to the following proposition,
which is key to our \revision{solution approach} (see~\autoref{fig:rho}).

\begin{prop}[{Convergence rate}]\label{prop:slem}
	There exists $ \lambda_{\text{th}} $ such that $ \rho(\lambda) $
	is decreasing for $ 0 < \lambda < \lambda_{\text{th}} $
	and increasing for $ \lambda_{\text{th}} < \lambda < \up{\tau} $.
	\revision{Hence,
	it holds $ \rate{} = \max\left\lbrace\rho(\lambda_2), \rho(\lambda_N)\right\rbrace $.}
\end{prop}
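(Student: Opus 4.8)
The plan is to build the statement from \cref{prop:monotonicity-real-roots,lem:monotonicity-complex-roots} by an elementary extremal argument on the pointwise maximum of the two root-modulus curves, and then to read off the second claim from the resulting valley shape of $\rho$. Write $\rho(\lambda)=\max\{\rho_\Re(\lambda),\,\rho_{\mathbb C}(\lambda)\}$, where $\rho_\Re$ is as in~\eqref{eq:rate-real-roots} and $\rho_{\mathbb C}(\lambda)$ is the largest modulus among the roots of $\pol{}{\lambda}$ with nonzero imaginary part, with the convention that whichever of the two curves has no corresponding root contributes $0$; this is harmless because for $\lambda\in(0,\up{\tau})$ the constant term of $\pol{}{\lambda}$ equals $\lambda>0$, so $\rho(\lambda)>0$ and the maximum is realized by a genuinely existing root. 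The left end is pinned down by the degeneration $\pol{}{0}=z^{\tau}(z-1)$: as $\lambda\to0^+$ one has $\rho_\Re(0^+)=1$ while every nonreal root tends to $0$, so in a right-neighbourhood of $0$ it holds $\rho=\rho_\Re$, which is strictly decreasing there by \cref{prop:monotonicity-real-roots}; in particular the threshold $\lambda_{\text{th},\Re}$ supplied by that lemma is positive.

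Next I would splice the two monotonicities. By \cref{lem:monotonicity-complex-roots} each nonreal root has modulus nondecreasing in $\lambda$, hence $\rho_{\mathbb C}$ is nondecreasing on the range of $\lambda$ over which nonreal roots exist; since nonreal roots, once created at a collision of real roots, expand toward the unit circle (as in the root-locus discussion preceding the lemmas), that range is an interval with right endpoint $\up{\tau}$, and with the $0$-extension $\rho_{\mathbb C}$ is nondecreasing on all of $(0,\up{\tau})$. Therefore, on $(\lambda_{\text{th},\Re},\up{\tau})$ both $\rho_\Re$ (by \cref{prop:monotonicity-real-roots}) and $\rho_{\mathbb C}$ are nondecreasing, so their maximum $\rho$ is nondecreasing there. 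On $(0,\lambda_{\text{th},\Re})$, $\rho_\Re$ is decreasing and $\rho_{\mathbb C}$ is nondecreasing, hence $\rho_{\mathbb C}-\rho_\Re$ is nondecreasing and changes sign at most once, at some $\lambda_c$: for $\lambda<\lambda_c$ one has $\rho=\rho_\Re$ (decreasing) and for $\lambda\in(\lambda_c,\lambda_{\text{th},\Re})$ one has $\rho=\rho_{\mathbb C}$ (nondecreasing). Setting $\lambda_{\text{th}}=\lambda_c$ if the sign change occurs in $(0,\lambda_{\text{th},\Re})$ and $\lambda_{\text{th}}=\lambda_{\text{th},\Re}$ otherwise yields a single threshold with $\rho$ decreasing on $(0,\lambda_{\text{th}})$ and increasing on $(\lambda_{\text{th}},\up{\tau})$.

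For the second assertion, for a stabilizing $K$ the condition~\eqref{eq:stability-condition-eig} gives $0<\lambda_2\le\cdots\le\lambda_N<\up{\tau}$, and by~\eqref{eq:conv-rate} $\rate{}=\max\{\rho(\lambda_j):j=2,\dots,N\}$. Since $\rho$ is nonincreasing on $(0,\lambda_{\text{th}}]$ and nondecreasing on $[\lambda_{\text{th}},\up{\tau})$, every $\lambda_j\le\lambda_{\text{th}}$ obeys $\rho(\lambda_j)\le\rho(\lambda_2)$ and every $\lambda_j\ge\lambda_{\text{th}}$ obeys $\rho(\lambda_j)\le\rho(\lambda_N)$; hence the maximum is attained at $\lambda_2$ or $\lambda_N$ and $\rate{}=\max\{\rho(\lambda_2),\rho(\lambda_N)\}$.

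The step I expect to be the real obstacle is not this extremal argument --- which is routine once \cref{prop:monotonicity-real-roots,lem:monotonicity-complex-roots} are in hand --- but the bookkeeping at the transitions between real and nonreal roots: one must verify that nonreal roots do not return to the real axis within $(0,\up{\tau})$ (so that $\rho_{\mathbb C}$ is genuinely nondecreasing, not merely increasing along each persisting root), that $\rho$ is continuous across these collisions (a merging pair of real roots is a real double root whose value equals the modulus of the nonreal pair born there), and that the degenerate case occurring for odd $\tau$, where real roots cease to exist beyond some $\bar\lambda$ and $\rho=\rho_{\mathbb C}$ thereafter, is covered by the same argument. These facts match the qualitative root-locus picture already sketched in the text but are where a careless proof would fail.
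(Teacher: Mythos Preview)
Your proposal is correct and follows the same strategy the paper uses: the paper simply asserts that \cref{prop:slem} follows ``straightly'' from \cref{prop:monotonicity-real-roots,lem:monotonicity-complex-roots}, and what you have written is exactly the pointwise-max argument that makes that assertion precise. The bookkeeping issues you flag at the end (continuity across real/complex root collisions, nonreturn of complex branches to the real axis, and the odd-$\tau$ disappearance of real roots beyond $\lambda_b$) are precisely the items implicitly swept into the paper's root-locus discussion and its proof of \cref{prop:monotonicity-real-roots} in the appendix.
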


By virtue of~\cref{prop:slem},
\cref{prob:gains} can be \revision{solved} \revision{via the same technique proposed in~\cite{carli11tac}},
which is summarized next.

Consider the subset of $ \mathcal{K} $ comprising matrices $ K $
with eigenvalue $ \lambda_2(K) = 1 $,
denoted by $ \bar{\mathcal{K}} $.
\revision{Then,
it holds
$ \mathcal{K} = \cup_{\beta\ge0}\beta\bar{\mathcal{K}} $.
In particular,
any $ K \in \mathcal{K} $ can be written as $K = \beta\bar{K}$ for some suitable $ \bar{K}\in\bar{\mathcal{K}} $ and $ \beta > 0 $.}
By virtue of this observation and~\cref{prop:slem},
\revision{and denoting the $i$th eigenvalue of $\bar{K}$ by $\bareig{}{i}$,}
problem~\eqref{eq:optimization-gains} can be equivalently written as
\begin{equation}\label{eq:optimization-gains-decoupled}
	\left\lbrace \bar{K}^*,\beta^*\right\rbrace \in 
	\argmin_{\bar{K}\in\bar{\mathcal{K}},\,\beta>0}
	\max\left\lbrace\rho(\beta),\rho(\beta\bareig{}{N})\right\rbrace,
\end{equation}
the solution to~\eqref{eq:optimization-gains} being retrieved as $ K^* = \beta^*\bar{K}^* $.

It can be seen that~\eqref{eq:optimization-gains-decoupled} \revision{can} in fact \revision{be} \emph{decoupled}
with respect to the two variables $ \bar{K} $ and $ \beta $,
and it can be solved by first optimizing over $ \bar{K} $ 
\revision{(with $\beta=1$)}
and then over $ \beta $
\revision{(given the solution $\bar{K}^*$ of the first sub-problem)}.
This is because $ \rho(\beta) $ is independent of $ \bar{K} $
and $ \rho(\beta\bareig{}{N}) $ is parametric in $ \bar{K} $.
The set $ \bar{\mathcal{K}} $ is nonconvex,
however,
the optimization for $ \bar{K} $ \revision{can be exactly recast}
into the following convex \revision{SDP},
noting that the solution features $ \bareig{*}{2} = 1 $
(with $\bareig{*}{i}$ the $i$th eigenvalue of $\bar{K}^*$),
\begin{equation}\label{eq:optimization-gains-decoupled-K}
	\bar{K}^*\in \argmin_{\bar{K}\in\mathcal{K},\,\bareig{}{2} \ge 1} \bareig{}{N}.
\end{equation}
Finally,
the optimal scaling parameter $\beta^*$ is retrieved as the solution to the following optimization problem,
\begin{argmini!}<b>
	{\substack{\beta>0}}
	{\max\left\lbrace\rho(\beta),\rho(\beta\bareig{*}{N})\right\rbrace\protect\label{eq:optimization-gains-decoupled-beta-objective}}
	{\label{eq:optimization-gains-decoupled-beta}}
	{\beta^*=}
	\addConstraint{\beta\bareig{*}{N}}{< \up{\tau}.\protect\label{eq:optimization-gains-decoupled-beta-constraint}}
\end{argmini!}
The linear constraint~\eqref{eq:optimization-gains-decoupled-beta-constraint} ensures that 
the stability condition~\eqref{eq:stability-condition-eig} is satisfied.
By virtue of~\cref{prop:slem},
\revision{minimizing the cost in}~\eqref{eq:optimization-gains-decoupled-beta-objective}
amounts to finding the unique solution of
\begin{equation}\label{eq:equation-beta}
	\rho(\beta) = \rho(\beta\bareig{*}{N}),
\end{equation}
\revision{which can be solved by the bisection method.
If the solution of~\eqref{eq:equation-beta} complies with~\eqref{eq:optimization-gains-decoupled-beta-constraint},
then~\eqref{eq:optimization-gains-decoupled-beta} is solved.}
If this is not the case,
\revision{\cref{prop:slem} implies that both the second smallest eigenvalue and the largest eigenvalue of $K$
need to be smaller than $\eig{}{\text{th}}$,
which means that $R = \rho(\beta)$}
and thus $ \beta $ shall be chosen as close as possible 
to the upper bound $ \nicefrac{\up{\tau}}{\bareig{*}{N}} $. 

\begin{rem}[Optimal uniform gains]\label{rem:optimal-uniform-gains}
	\revision{Let} the feedback gain matrix be chosen as $ \revision{K = -gL} $,
	 \revision{then the uniform} gain $ \revision{g} $ 
	can be optimized akin $ \beta $ according to~\eqref{eq:optimization-gains-decoupled-beta}
	\revision{by letting} $ \bar{K}^{*} = L $.
\end{rem}

\section{Numerical Experiments}\label{sec:simulations}

In this section,
we numerically solve~\cref{prob:optimal-architecture} \revision{for different networks}
\revision{with the goal of investigating the optimal controller architecture for a given initial structure $ \net{1} $.}
Because we now consider multiple architectures,
we use the subscript $ n $ to specify the architecture $ \net{n} $ \revision{constructed from $\net{1}$ according to~\cref{ass:architecture}
(recall that a larger $n$ corresponds to a larger number of links)}.
\revision{In view of~\eqref{eq:prob-optimal-n},
we aim to find an optimal parameter $n^*$ 
such that 
the controller architecture $\net{n^*}$
yields the minimum convergence rate $\rate{n^*}$ of the dynamics~\eqref{eq:controlled-dynamics}.}

All networks have $ N = 100 $ agents.
\revision{The parameter $ n $ 
\revision{(referred to as \textit{number of hops} in the $x$-axis of Figs.~\ref{fig:conv_rate_reg_3_N_100_tau_n}--\ref{fig:conv_rate_rand_N_100_tau_n})}
ranges within $\{1,\dots,n_{\text{max}}\}$
such that $ \net{n_\text{max} + 1} $ is fully connected.}\footnote{
	When mentioning ``number of hops" we actually refer to new communication links
	added to the initial architecture
	according to~\cref{ass:architecture}.
}

In each scenario,
we consider the following design strategies for the feedback gains:
uniform gain $ \revision{g} = \frac{\up{\tau}}{2\dmax{n}+1} $ (\textit{not optimized});
uniform gain $ \revision{g}^* $ computed according to~\cref{rem:optimal-uniform-gains} (\textit{optimized});
multiple gains addressing~\cref{prob:gains} and computed according to~\eqref{eq:optimization-gains-decoupled-K}--\eqref{eq:optimization-gains-decoupled-beta}.
\revision{In all cases,
the solution of the equation~\eqref{eq:equation-beta} actually meets the stability condition~\eqref{eq:optimization-gains-decoupled-beta-constraint},
which makes the optimization problem~\eqref{eq:optimization-gains-decoupled-beta} over $\beta$ feasible.}

Figure~\ref{fig:conv_rate_reg_3_N_100_tau_n} shows the convergence rate $ \rate{n} $
with the initial network $ \net{1} $ chosen as a $ 3 $-regular graph,
with communication delays increasing linearly with $n$
(which is inspired by multi-hop communication).
We first note that all curves are consistent with our optimization of feedback gains:
the non-optimized uniform gains induce the worst (largest) convergence rates,
whereas progressive improvement is observed passing to optimized uniform gains
and eventually to multiple optimized gains.
One can see that the smallest convergence rate is always achieved by distributed
(not fully connected) architectures.
Interestingly,
the optimal \revision{parameter $n^*$} is quite different according to the chosen design strategy for the feedback gains:
in particular,
with both uniform not optimized gains and multiple gains
the fastest convergence is achieved for $ n^* = 5 $,
whereas choosing a single optimized gain yields $ n^* = 7 $ as the optimal controller architecture.
This calls for carefulness when performing control design
and in particular when \textit{jointly} designing both feedback gains and controller architecture.
Also,
note that some suboptimal choices of gains actually induce better or comparable performance if complemented with
suitably connected architecture:
for example,
architecture $ \net{4} $ with non-optimized uniform gains performs slightly better than
sparser architecture $ \net{3} $ with optimized uniform gains,
whereas architecture $ \net{7} $ with optimized uniform gains
is better than denser architecture $ \net{8} $ with optimized multiple gains.

Figure~\ref{fig:conv_rate_reg_3_N_100_tau_n2} shows performances when delays increase quadratically
with $ n $,
which is inspired by the setup in~\cite{gupta2010delay} where
number of links and delays increase with the transmission power.
It can be seen that all curves are ``pulled" toward bottom-left and that
the optimal architectures are sparse,
featuring $ n^* = 1 $ for non-optimized uniform gains and $ n^* = 2 $ for optimized gains.
This is because,
compared to~\autoref{fig:conv_rate_reg_3_N_100_tau_n},
delays grow faster and degrade the dynamics more quickly with $ n $,
forcing the controller to act in a more conservative way.

Figure~\ref{fig:conv_rate_reg_4_N_100_tau_n} shows convergence rates with denser architectures,
where the initial network $ \net{1} $ is a 4-regular graph.
The qualitative behavior is similar to~\autoref{fig:conv_rate_reg_3_N_100_tau_n}.
The two curves corresponding to non-optimized uniform gains
and multiple gains exhibit nontrivial points of minimum respectively at $ n^* = 2 $ and $ n^* = 3 $,
whereas the optimal architecture for uniform optimized gains is almost fully connected.
Analogous considerations hold for~\autoref{fig:conv_rate_reg_4_N_100_tau_n2},
with trends similar to~\autoref{fig:conv_rate_reg_3_N_100_tau_n2}.

Finally,
tests performed on a random graph in~\autoref{fig:conv_rate_rand_N_100_tau_n}
(with sparse and dense areas)
with linear delay increase
also show behavior similar to~\cref{fig:conv_rate_reg_3_N_100_tau_n,fig:conv_rate_reg_4_N_100_tau_n},
suggesting a consistent pattern that mostly depend on the delay rate $ f(\cdot) $.

\begin{figure}
	\centering
	\includegraphics[height=.5\linewidth]{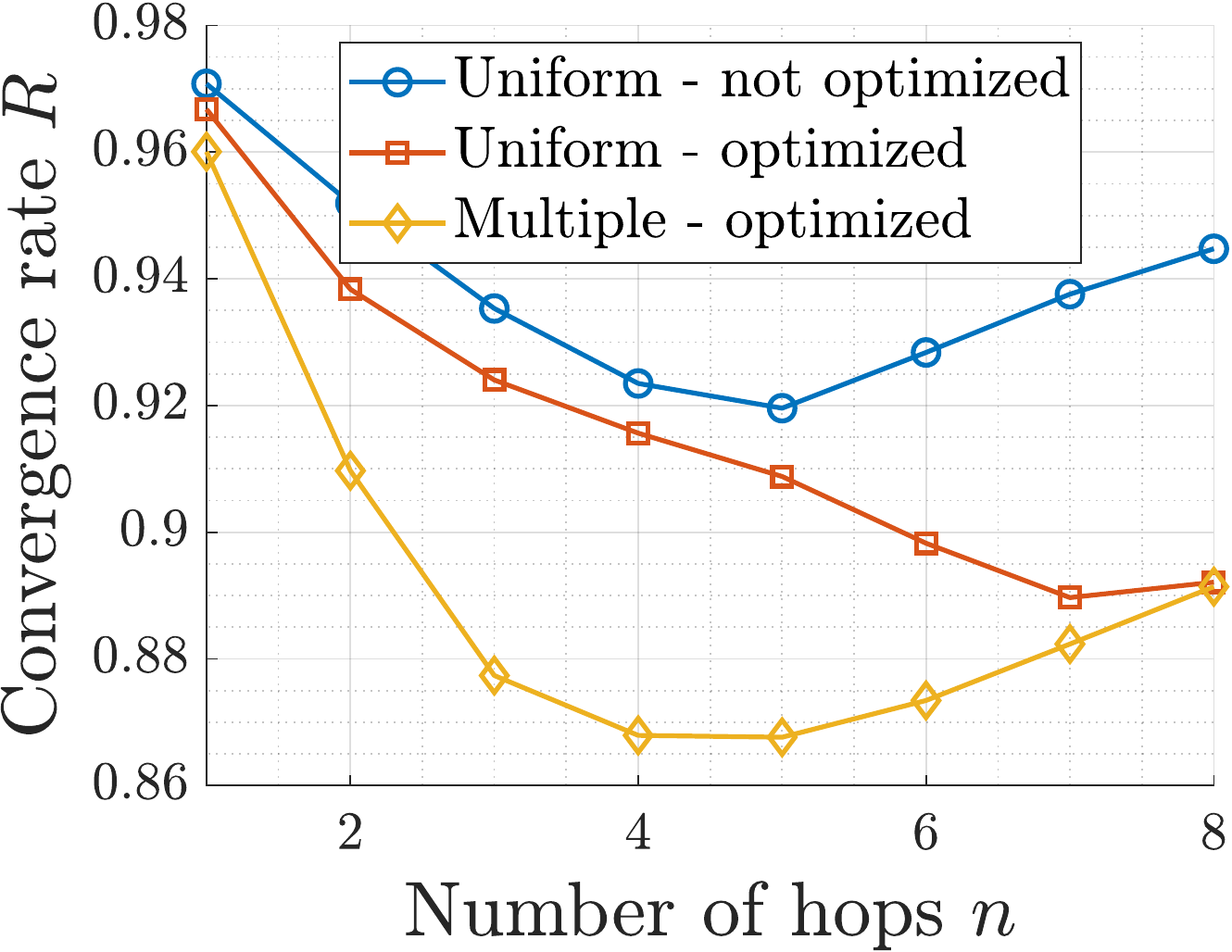}
	\caption{Convergence rate with $ 3 $-regular graph $ \net{1} $, $ N = 100 $, $ \taun = n $.}
	\label{fig:conv_rate_reg_3_N_100_tau_n}
\end{figure}

\begin{figure}
	\centering
	\includegraphics[height=.5\linewidth]{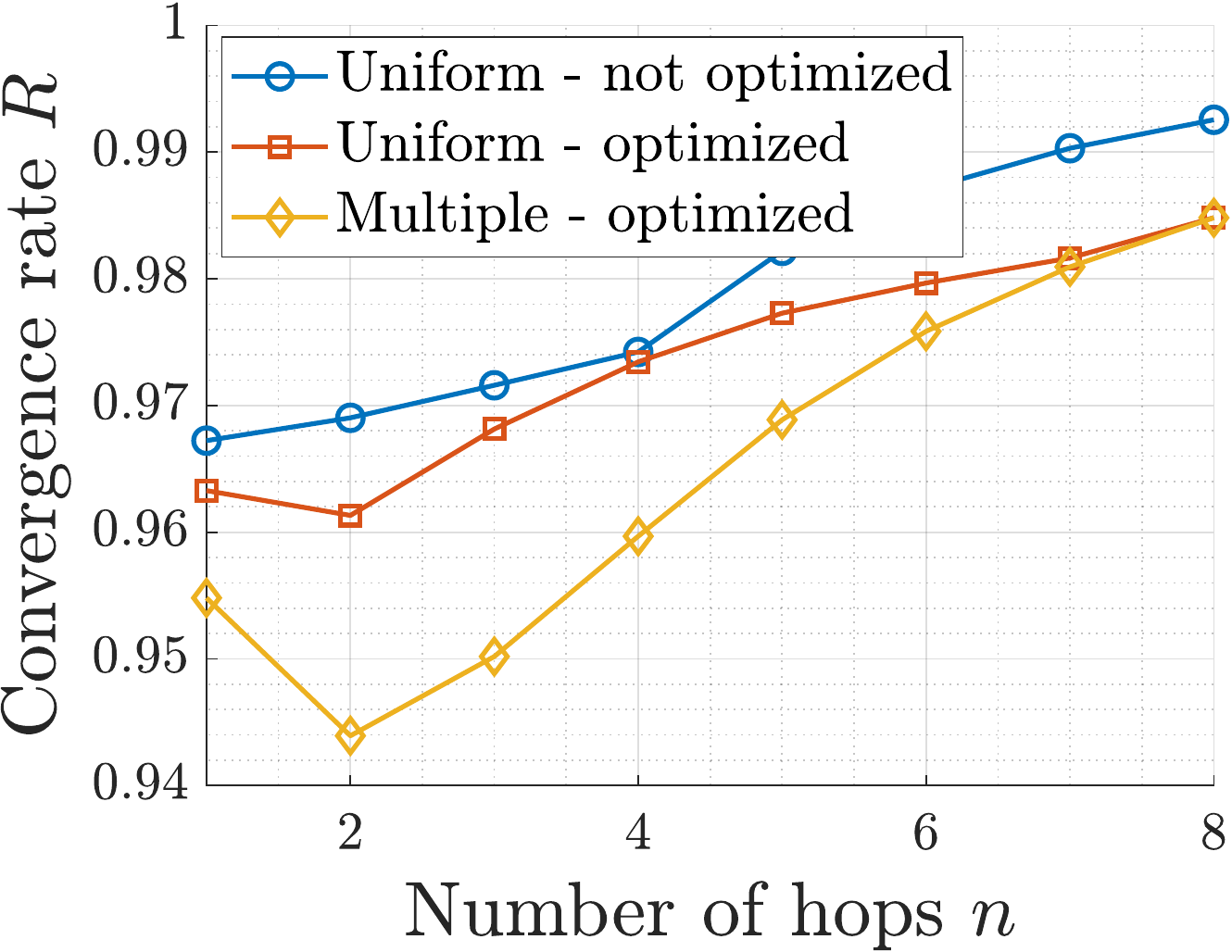}
	\caption{Convergence rate with $ 3 $-regular graph $ \net{1} $, $ N = 100 $, $ \taun = n^2 $.}
	\label{fig:conv_rate_reg_3_N_100_tau_n2}
\end{figure}

\begin{figure}
	\centering
	\includegraphics[height=.5\linewidth]{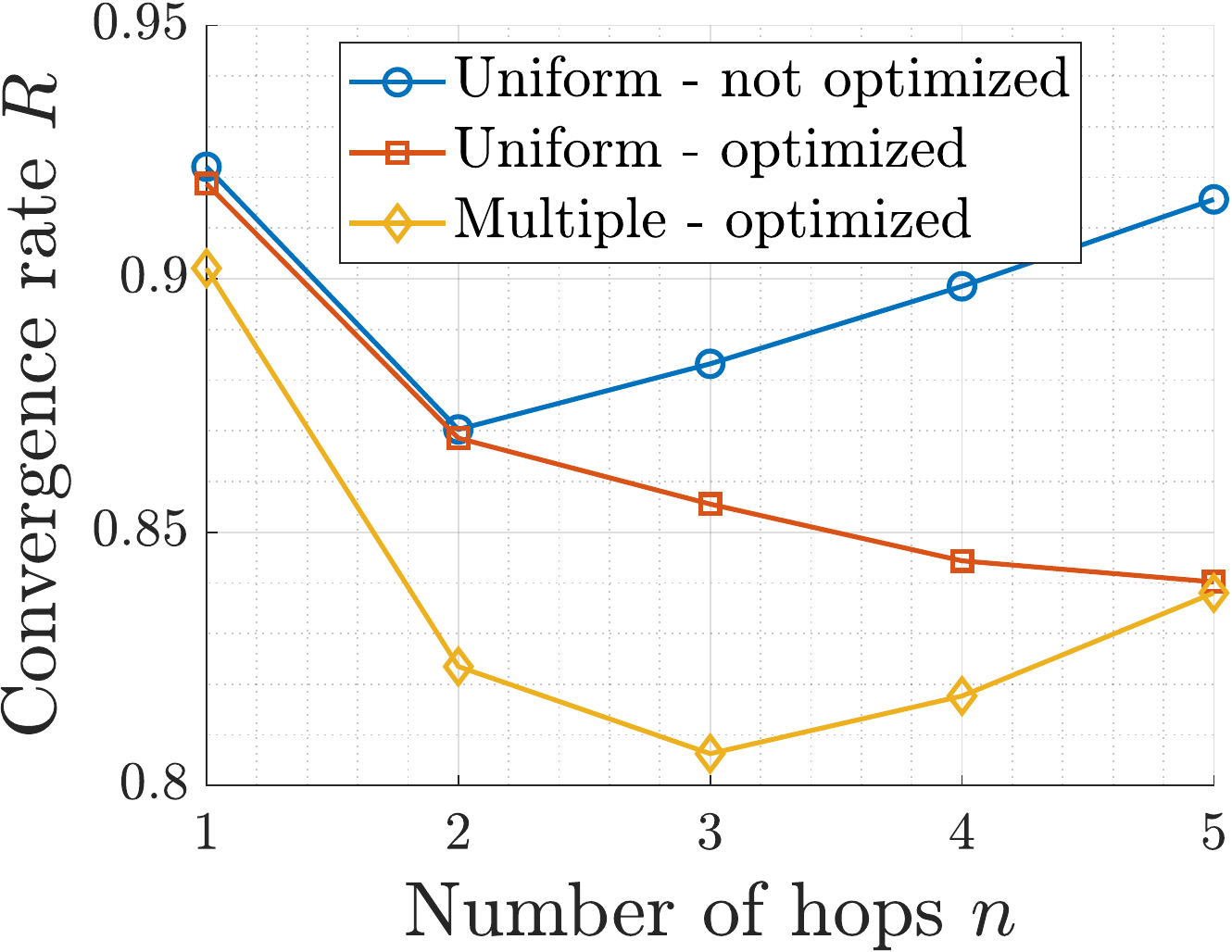}
	\caption{Convergence rate with $ 4 $-regular graph $ \net{1} $, $ N = 100 $, $ \taun = n $.}
	\label{fig:conv_rate_reg_4_N_100_tau_n}
\end{figure}

\begin{figure}
	\centering
	\includegraphics[height=.5\linewidth]{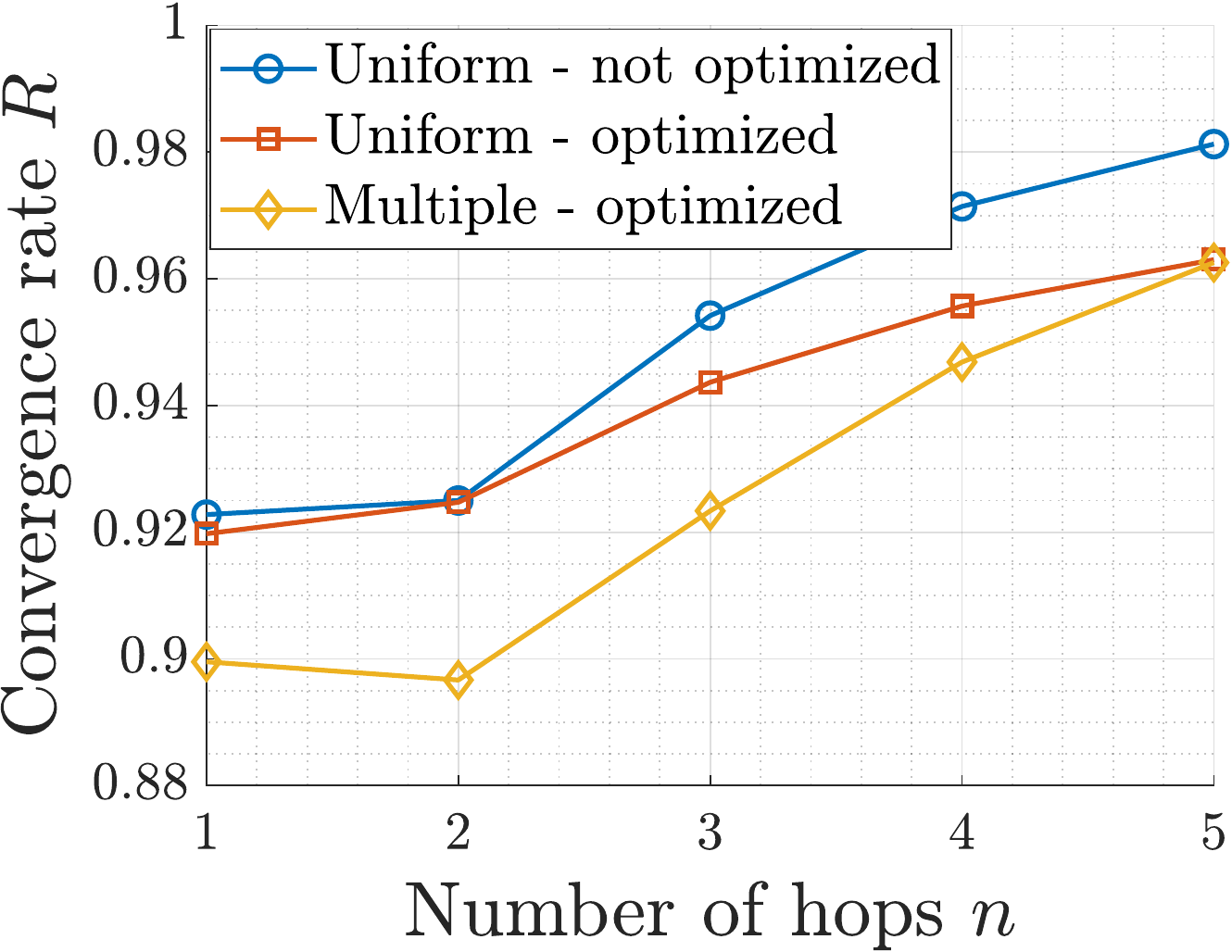}
	\caption{Convergence rate with $ 4 $-regular graph $ \net{1} $, $ N = 100 $, $ \taun = n^2 $.}
	\label{fig:conv_rate_reg_4_N_100_tau_n2}
\end{figure}

\begin{figure}
	\centering
	\begin{minipage}[l]{.5\linewidth}
		\centering
		\includegraphics[height=.75\linewidth]{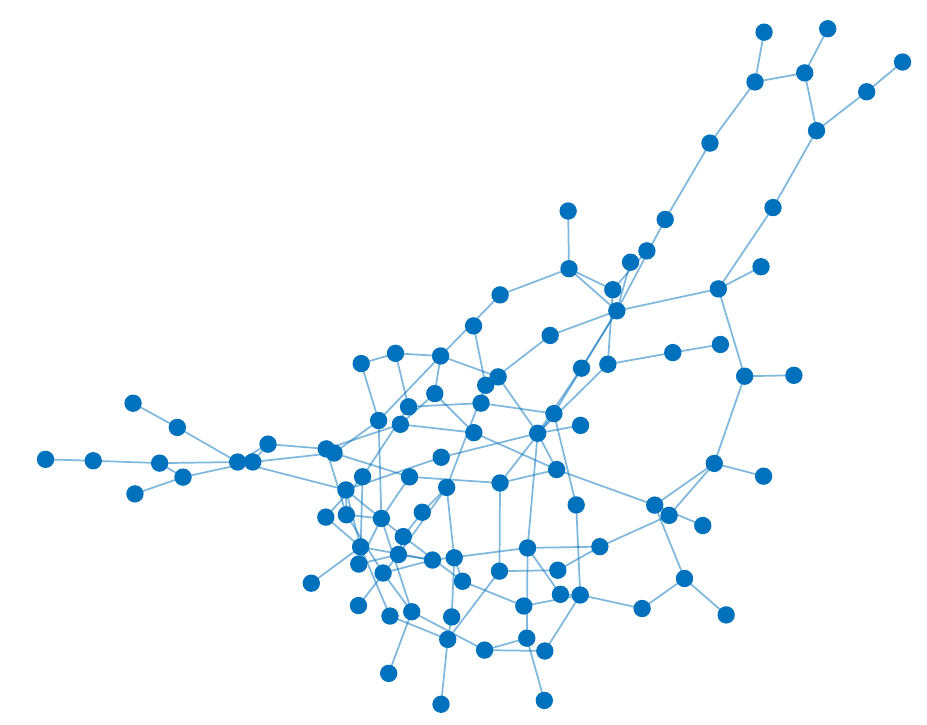}
	\end{minipage}%
	\hfil
	\begin{minipage}[r]{.5\linewidth}
		\centering
		\includegraphics[height=.75\linewidth]{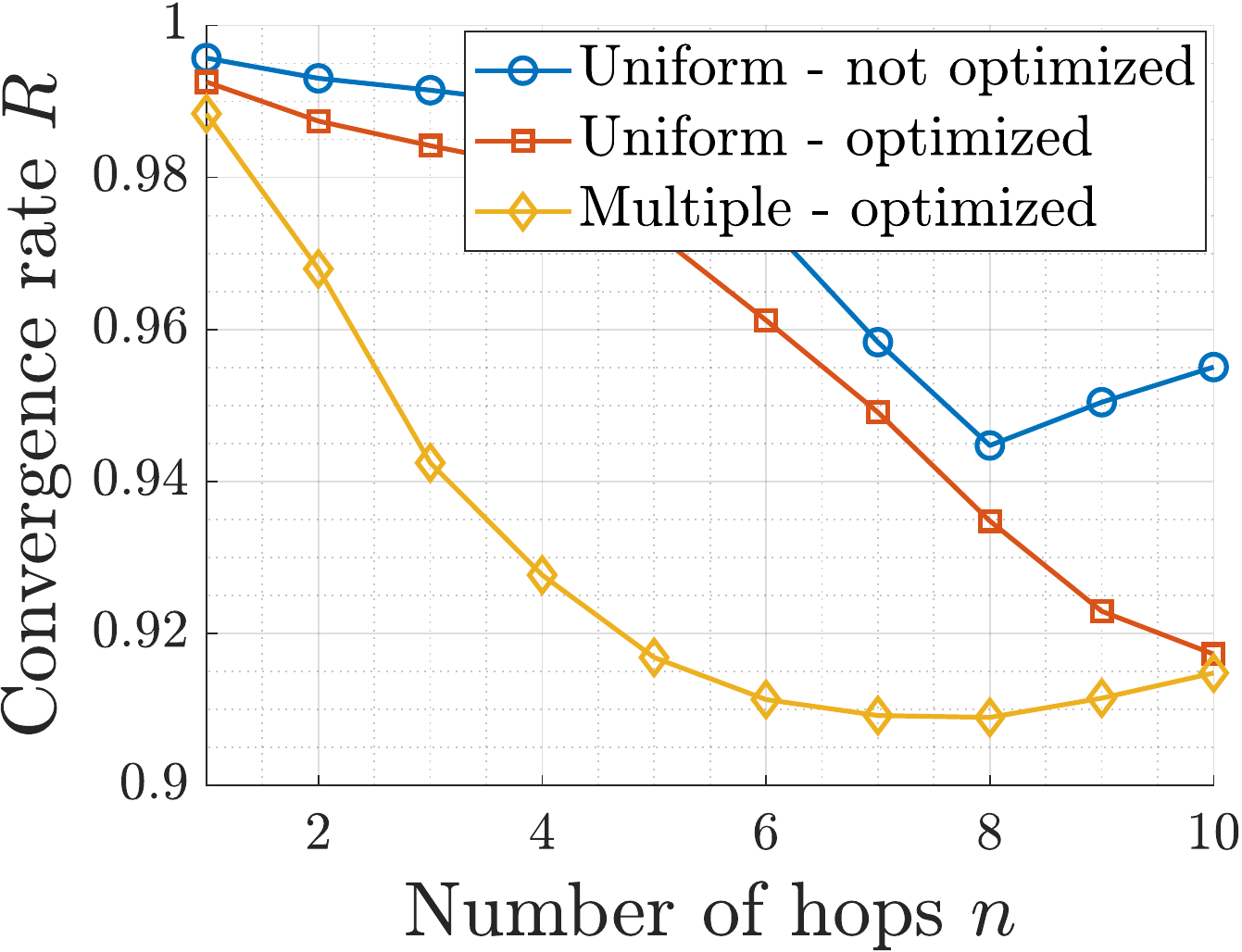}
	\end{minipage}
	\caption{Convergence rate with random graph $ \net{1} $, $ N = 100 $, $ \taun = n $.}
	\label{fig:conv_rate_rand_N_100_tau_n}
\end{figure}

\section{Conclusion}\label{sec:conclusion}

In this \paperType,
we have shown
that optimal distributed controllers
with respect to the convergence rate of a consensus protocol
need not be fully connected.
In fact,
the fastest convergence for a network of single integrators 
is achieved via sparse architectures
if feedback delays increase fast enough with the number of communication \revision{links}.
	
	\bibliographystyle{IEEEtran}

\begin{thebibliography}{10}
\providecommand{\url}[1]{#1}
\csname url@samestyle\endcsname
\providecommand{\newblock}{\relax}
\providecommand{\bibinfo}[2]{#2}
\providecommand{\BIBentrySTDinterwordspacing}{\spaceskip=0pt\relax}
\providecommand{\BIBentryALTinterwordstretchfactor}{4}
\providecommand{\BIBentryALTinterwordspacing}{\spaceskip=\fontdimen2\font plus
\BIBentryALTinterwordstretchfactor\fontdimen3\font minus
  \fontdimen4\font\relax}
\providecommand{\BIBforeignlanguage}[2]{{%
\expandafter\ifx\csname l@#1\endcsname\relax
\typeout{** WARNING: IEEEtran.bst: No hyphenation pattern has been}%
\typeout{** loaded for the language `#1'. Using the pattern for}%
\typeout{** the default language instead.}%
\else
\language=\csname l@#1\endcsname
\fi
#2}}
\providecommand{\BIBdecl}{\relax}
\BIBdecl

\bibitem{4118472}
R.~Olfati-Saber, J.~A. Fax, and R.~M. Murray, ``Consensus and cooperation in
  networked multi-agent systems,'' \emph{Proc. IEEE}, vol.~95, no.~1, pp.
  215--233, 2007.

\bibitem{XIAO200733}
L.~Xiao, S.~Boyd, and S.-J. Kim, ``Distributed average consensus with
  least-mean-square deviation,'' \emph{J. Parallel Distrib. Comput.}, vol.~67,
  no.~1, pp. 33--46, 2007.

\bibitem{MUNZ20101252}
U.~Münz, A.~Papachristodoulou, and F.~Allgöwer, ``Delay robustness in
  consensus problems,'' \emph{Automatica}, vol.~46, no.~8, pp. 1252--1265,
  2010.

\bibitem{ren2017finite}
H.~Ren, G.~Zong, L.~Hou, and Y.~Yang, ``Finite-time resilient decentralized
  control for interconnected impulsive switched systems with neutral delay,''
  \emph{ISA Trans.}, vol.~67, pp. 19--29, 2017.

\bibitem{SUN2021419}
S.~Sun, H.~Zhang, W.~Li, and Y.~Wang, ``Time-varying delay-dependent
  finite-time boundedness with $\mathcal{H}_{\infty}$ performance for markovian
  jump neural networks with state and input constraints,''
  \emph{Neurocomputing}, vol. 423, pp. 419--426, 2021.

\bibitem{bamjovmitpat12}
B.~Bamieh, M.~R. Jovanovi\'c, P.~Mitra, and S.~Patterson, ``Coherence in
  large-scale networks: dimension dependent limitations of local feedback,''
  \emph{IEEE Trans. Automat. Control}, vol.~57, no.~9, pp. 2235--2249, 2012.

\bibitem{Chehardoli2019}
H.~Chehardoli and A.~Ghasemi, ``Formation control of longitudinal vehicular
  platoons under generic network topology with heterogeneous time delays,''
  \emph{J. Vib. Control}, vol.~25, no.~3, pp. 655--665, 2019.

\bibitem{nedic2008convergence}
P.-A. Bliman, A.~Nedic, and A.~Ozdaglar, ``Rate of convergence for consensus
  with delays,'' in \emph{Proc. IEEE CDC}, 2008, pp. 4849--4854.

\bibitem{8430769}
S.~Dezfulian, Y.~Ghaedsharaf, and N.~Motee, ``On performance of time-delay
  linear consensus networks with directed interconnection topologies,'' in
  \emph{Proc. ACC}, 2018, pp. 4177--4182.

\bibitem{8485772}
Y.~Ghaedsharaf, M.~Siami, C.~Somarakis, and N.~Motee, ``Performance improvement
  in noisy linear consensus networks with time-delay,'' \emph{IEEE Trans.
  Autom. Control}, vol.~64, no.~6, pp. 2457--2472, 2019.

\bibitem{LI2018144}
L.~Li, M.~Fu, H.~Zhang, and R.~Lu, ``Consensus control for a network of high
  order continuous-time agents with communication delays,'' \emph{Automatica},
  vol.~89, pp. 144--150, 2018.

\bibitem{ANDERSON2019364}
J.~Anderson, J.~C. Doyle, S.~H. Low, and N.~Matni, ``System level synthesis,''
  \emph{Annu. Rev. Control}, vol.~47, pp. 364--393, 2019.

\bibitem{JOVANOVIC201676}
M.~R. Jovanović and N.~K. Dhingra, ``Controller architectures: Tradeoffs
  between performance and structure,'' \emph{Eur. J. Control}, vol.~30, pp.
  76--91, 2016.

\bibitem{8047285}
E.~Schoof, A.~Chapman, and M.~Mesbahi, ``Weighted bearing-compass dynamics:
  Edge and leader selection,'' \emph{IEEE Tran. Netw. Sci. Eng.}, vol.~5,
  no.~3, pp. 247--260, 2018.

\bibitem{gupta2010delay}
S.~Vanka, V.~Gupta, and M.~Haenggi, ``Power-delay analysis of consensus
  algorithms on wireless networks with interference,'' \emph{Int. J. Syst.
  Control Commun.}, vol.~2, no. 1-3, pp. 256--274, 2010.

\bibitem{ballotta2023tcns}
L.~Ballotta, M.~R. Jovanović, and L.~Schenato, ``Can decentralized control
  outperform centralized? {T}he role of communication latency,'' \emph{IEEE
  Control Netw. Syst.}, pp. 1--11, 2023,
  \linkToPdf{https://arxiv.org/abs/2109.00359}.

\bibitem{bagirov2014introduction}
A.~Bagirov, N.~Karmitsa, and M.~M. M{\"a}kel{\"a}, \emph{Introduction to
  Nonsmooth Optimization: theory, practice and software}.\hskip 1em plus 0.5em
  minus 0.4em\relax Springer, 2014, vol.~12.

\bibitem{carli11tac}
R.~Carli, A.~Chiuso, L.~Schenato, and S.~Zampieri, ``Optimal synchronization
  for networks of noisy double integrators,'' \emph{IEEE Tran. Autom. Control},
  vol.~56, no.~5, pp. 1146--1152, 2011.

\end{thebibliography}

	\appendix
	\numberwithin{equation}{subsection}

\subsection{Proof of~\autoref{prop:monotonicity-real-roots}}\label{app:monotonicity-real-eigenvalues}

First,
we show that the negative root,
when it exists,
is decreasing.
Second,
we show the monotonic behavior of the two positive roots of $ \pol{}{\lambda} $,
as long as they exist.
The combination of those facts yields the claim in~\cref{prop:monotonicity-real-roots}.

We use the implicit function theorem to compute derivative of roots.
The hypotheses of the theorem are satisfied as long as $ z\neq0 $
and the two positive roots are not coincident,
which is enough to prove the claim having those two sets zero Lebesgue measure.
The derivative of a root $ z\in\Real{} $ is
\begin{equation}\label{eq:derivative-real-roots}
	\dfrac{\mathrm{d}z}{\mathrm{d}\lambda}(\lambda) = \dfrac{\partial_\lambda\pol{}{\lambda}}{\partial_z\pol{}{\lambda}}
													= \dfrac{-1}{(\tau+1)z^\tau - \tau z^{\tau-1}}.
\end{equation}

\myParagraph{Negative root}
Polynomial $ \pol{}{\lambda} $ has a negative root for $ \lambda > 0 $ and even $ \tau $.
This can be seen,
for example,
via Descartes' rule of signs
and the fact that $ h(-z;\lambda) $ has one real root
being its coefficients real.
Then,
derivative~\eqref{eq:derivative-real-roots} is always negative for $ z < 0 $.
Conversely,
again Descartes' rule of signs proves that no real negative root exist for odd $ \tau $.

\myParagraph{Positive roots}
When $ \lambda = 0 $,
the simple root at $ z = 1 $
has negative derivative. 
On the other hand,
for $ \lambda \gtrsim 0 $,
the real root at $ z \gtrsim 0 $ is an implicit function of $ \lambda $ with positive derivative.
The denominator $ \partial_z\pol{}{\lambda} $ of~\eqref{eq:derivative-real-roots} is continuous in $ z $ and thus in $ \lambda $
(over appropriate domain):
hence,
the larger root is decreasing with $ \lambda $ as long as $ \partial_z\pol{}{\lambda} > 0 $
and
the smaller root is increasing with $ \lambda $ as long as $ \partial_z\pol{}{\lambda} < 0 $.
The denominator becomes zero for $ \lambda = \lambda_{\text{b}} \doteq \frac{\tau^\tau}{(\tau+1)^{\tau+1}} $ 
when $ z = \frac{\tau}{\tau + 1} $,
and the derivatives switch sign for $ \lambda > \lambda_{\text{b}} $.
However,
this event is not possible.
Indeed,
$ \pol{}{\lambda} $ admits real positive roots only within the interval $ [0,\frac{\tau}{\tau + 1}] $.
This is because $ \frac{\tau}{\tau + 1} $ is the point of minimum of $ \pol{}{\lambda} $,
which is zero for $ \lambda = \lambda_{b} $ and positive for $ \lambda > \lambda_{b} $.
We conclude that $ \pol{}{\lambda} $ has two real positive roots for $ 0 < \lambda \le \lambda_{b} $,
one monotonically increasing from $ \epsilon $ to $ \frac{\tau}{\tau + 1} $,
for any $ \epsilon > 0 $,
and one monotonically decreasing from $ 1 $ to $ \frac{\tau}{\tau + 1} $.

\autoref{prop:monotonicity-real-roots} holds true with $ \lambda_{\text{th},\Re} $
equal to (i) $ \lambda_{\text{b}} $
(with $\rho_\Re$($\lambda$) monotonically increasing), 
for odd $ \tau $,
and (ii) the minimum between $ \lambda_{\text{b}} $ and the $ \lambda $
such that the negative root is larger in modulus than the positive roots (if any),
for even $ \tau $.

\subsection{Proof of~\autoref{lem:monotonicity-complex-roots}}\label{app:monotonicity-complex-eigenvalues}

We study the sign of the derivative of each complex root $ z\in\mathbb{C} $ of $ \pol{}{\lambda} $. 
We use again the implicit function theorem and rewrite derivative~\eqref{eq:derivative-real-roots} for $ \rho > 0 $,
where $ z = \rho\e^{j\theta} $,
\begin{equation}\label{eq:derivative-complex-roots}
	\dfrac{\mathrm{d}z}{\mathrm{d}\lambda}(\lambda)
	= -\dfrac{(\tau+1)\rho^\tau\e^{-j\tau\theta} - \tau \rho^{\tau-1}\e^{-j(\tau-1)\theta}}{|(\tau+1)z^\tau - \tau z^{\tau-1}|^2}.
\end{equation}
Standard derivation rules and algebraic manipulations yield
\begin{equation}\label{eq:derivative-complex-roots-modulus}
	\begin{aligned}
		\dfrac{1}{2}\dfrac{\mathrm{d}\rho^2}{\mathrm{d}\lambda}(\lambda) 
		&= \Re(z)\dfrac{\mathrm{d}\Re(z)}{\mathrm{d}\lambda}(\lambda) + \Im(z)\dfrac{\mathrm{d}\Im(z)}{\mathrm{d}\lambda}(\lambda)\\
		&= -\dfrac{(\tau+1)\rho^{\tau+1}\cos((\tau+1)\theta) - \tau \rho^{\tau}\cos(\tau\theta)}{|(\tau+1)z^\tau - \tau z^{\tau-1}|^2},
	\end{aligned}
\end{equation}
from which it follows that $ \frac{\mathrm{d}\rho}{\mathrm{d}\lambda}(\lambda) > 0 $ is equivalent to
\begin{equation}\label{eq:derivative-complex-roots-modulus-positive}
	(\tau+1)\rho\cos((\tau+1)\theta) - \tau \cos(\tau\theta) < 0.
\end{equation}
By considering the equality $ \Im(\pol{}{\lambda}) = 0 $
and assuming that $ \sin((\tau+1)\theta) \neq 0 $,
we get the following relationship,
\begin{equation}\label{eq:relationship-modulus-phase}
	\rho = \dfrac{\sin(\tau\theta)}{\sin((\tau+1)\theta)}.
\end{equation}
Note that the set of roots such that $ \sin((\tau+1)\theta) = 0 $ is discrete with zero Lebesgue measure
and does not impact monotonicity of $ \rho $.
Then,
combining~\eqref{eq:derivative-complex-roots-modulus-positive} and~\eqref{eq:relationship-modulus-phase}
yields
\begin{equation}\label{eq:derivative-complex-roots-modulus-positive-1}
	(\tau+1)\dfrac{\sin(\tau\theta)}{\sin((\tau+1)\theta)}\cos((\tau+1)\theta) - \tau \cos(\tau\theta) < 0,
\end{equation}
which standard manipulations transform to
\begin{equation}\label{eq:derivative-complex-roots-modulus-positive-2}
	\sin(\tau\theta)\cos(\tau\theta)\cos\theta - \sin^2(\tau\theta)\sin\theta < \tau,
\end{equation}
which is always true for $ \tau \ge 2 $.
The case $ \tau = 1 $ is trivially verified by explicitly computing the roots of $ \pol{}{\lambda} $.
	
\end{document}